\documentclass[12pt,reqno]{amsart}
\usepackage{amsaddr}
\usepackage{cite}
\usepackage[colorlinks,citecolor=blue,urlcolor=blue,hypertexnames=false, pagebackref]{hyperref}
\usepackage[capitalize,nameinlink]{cleveref}

\usepackage{amssymb,amsmath,amsfonts,latexsym}
\usepackage{ragged2e}
\usepackage{bm}
\usepackage{mathtools}
\usepackage{enumerate}
\usepackage{array, graphics,xcolor}
\allowdisplaybreaks
\usepackage{mathrsfs}
\usepackage{mathtools}
\usepackage{blkarray, bigstrut}
\usepackage{color, bm, amscd, tikz-cd, mathtools}
\usepackage{csquotes}
\newcommand{\bydef}{\stackrel{\rm def}{=}}

\usepackage{tikz}

\newtheorem{thm}{Theorem}[section]

\newtheorem{cor}[thm]{Corollary}
\newtheorem{lem}[thm]{Lemma}

\newtheorem{theorem}{Theorem}
\newtheorem{defn}[thm]{Definition}
\newtheorem{rem}[thm]{Remark}

\numberwithin{equation}{section}

\def\tilde{\widetilde}

\def\C{\mathbb{C}}

\def\diag{\mathrm{diag}}

\def\epsilon{\varepsilon}

\def\bH{\mathbb{H}}
\def\H{\mathcal{H}}

\def\K{\mathcal{K}}

\def\N{\mathbb{N}}
\def\nn{\nonumber}

\def\phi{\varphi}

\def\T{\underline{T}}

\def\X{\underline{X}}
\def\Y{\underline{Y}}
\def\W{\underline{W}}
\def\Z{\underline{Z}}
\def\bM{\mathbb{M}}
\def\cM{\mathcal{M}}
\def\bU{\mathbb{U}}

\author{Tirthankar Bhattacharyya}
\address{Department of Mathematics, Indian Institute of Science\\
	Bangalore 560012, India}
\email{tirtha@iisc.ac.in}

\author{James Eldred Pascoe}
\address{Department of Mathematics, Drexel
	University\\ Philadelphia, Pennsylvania,
	USA}
\email{jep362@drexel.edu}

\author{Chandan Pradhan}
\address{Department of Mathematics and Statistics, 	University of New Mexico\\
	Albuquerque, NM 87106, USA}
\email{chandan.pradhan2108@gmail.com}

\usepackage{fancyhdr}
\begin{document}
	\thanks{{\em 2020 Mathematics Subject Classification.  Primary 47L07, 16R50, 47A13; Secondary 93B28, 15A22.} \\
		{\em Key words and phrases}: Approximation, Sum of squares, Rational inner functions, Realization formula.}
		
	\title[Nc-Carath\'eodory approximation]{Finite sum of squares, finite realization and noncommutative Carath\'eodory approximation}
	
	\maketitle

\begin{abstract}
In the noncommutative polydisc, we first prove a positive sum of squares formula for a non-negative hereditary rational nc-function. The number of summands is finite. This result is used to derive a finite-dimensional realization formula for contractive nc-rational functions, where the colligation matrix is contractive. It is unitary if and only if the function is inner. Finally, we apply these results to generalize Carathéodory’s classical theorem - approximating holomorphic self-maps of the unit disc by finite Blaschke products - to the setting of holomorphic functions on the noncommutative polydisc. This is in sharp contrast with the commutative situation where Carathéodory’s approximation is known for Schur classes only in the unit disc and the unit bidisc.

\end{abstract}

\section{Introduction}

\subsection{Finite sum of squares and finite dimensional realization}
 Let $n, d\in\N$. Let $\bM_n$ denote the 
space all $n\times n$ complex matrices over complex numbers with the operator norm, and let $\bM_n^d$ denote $d$ copies of $\bM_n$, that is  
$\bM_n^d=\bM_n\times\cdots \times \bM_n$ ($d$ times). Set \[\bM^d:=\bigsqcup_{n=1}^{\infty} \bM_n^d.\]
Let $\bU_n$ denotes the set of all $n\times n$ unitary matrices, and set $\bU_n^d=\bU_n\times\cdots\times\bU_n^d$ ($d$ times). For brevity, let $\X$ denote the tuple $(X_1, \ldots, X_d)$ and $\|\X\|=\max_{1\leq i \leq d}\|X_i\|$.
Let 
\begin{align*}
	\mathbb{D}^d_{\rm nc} = \{\X \in \bM^d : \|\X\| < 1\}, \quad \text{and} \quad	\partial\mathbb{D}^d_{\rm nc} = \{\X \in \bM^d : X_i^*X_i = I, i=1,\ldots,d\}.
\end{align*}
This $\mathbb{D}^d_{\rm nc}$ is called the {\em noncommutative polydisc (nc-polydisc)}.

\begin{defn}
	A {\em noncommutative polynomial (an nc-polynomial), also called a free polynomial}, in variables $(X_1,\ldots,X_d)$ on $\mathbb{M}^d$ is a finite linear 
combination, with complex coefficients, of words formed from the 
	non-commuting symbols
	\[
	X_1,\ldots,X_d
	\]
with the empty word denoting the constant function $1$. It is an element of the free associative algebra over $\mathbb C$ in $d$ (non-commuting) 
variables.	A $*$-nc polynomial in variables $(X_1,\ldots,X_d)$ on $\mathbb{M}^d$ is a finite linear combination, with complex coefficients, of words formed from the 
	non-commuting symbols
	\[
	X_1,\ldots,X_d, X_1^*,\ldots,X_d^*.
	\]
\end{defn}
For example, $X_1X_2-X_2X_1$ is an nc polynomial and $X_1X_2^*-X_2X_1$ is a $*$-nc polynomial.


\begin{defn}
	A {\em noncommutative rational expression (an nc-rational expression)}  in variables $(X_1,\ldots,X_d)$ on $\mathbb{M}^d$ is an expression in terms of 
addition, multiplication, scalar multiplication 
	and inverses of the noncommuting symbols $	X_1,\ldots,X_d.$  A $*$-nc rational expression in variables $(X_1,\ldots,X_d)$ on $\mathbb{M}^d$ is an 
expression in terms of addition, multiplication, scalar multiplication and inverses of $X_1,\ldots,X_d, X_1^*,\ldots,X_d^*$.
\end{defn}
For example $(X_1X_2-X_2X_1)^{-1}X_3, 1$ and $X_1X_1^{-1}$ are nc rational expressions. The latter two are equal where they are defined. 
$X_2^*(X_1X_2-X_2X_1)^{-1}X_3, (X_1^*)^{-1}X_2$ are examples of $*$-nc rational expressions.
\begin{defn}
	A {\em $*$-noncommutative rational function ($*$-nc rational function)} in variables $(X_1,\ldots,X_d)$ on $\mathbb{M}^d$ is an equivalence class of 
$*$-nc rational expressions, where two $*$-nc rational 
	expressions with nonempty domains are equal if they are the same on the intersection of their domains of definition. The definition of an nc analytic 
rational function is similar.
\end{defn}
For example $X_1X_1^{-1}$ equals $1$ as a rational function. 

An nc-rational function $\Theta$ has limiting boundary values because $\bM_n^d \cap \partial\mathbb{D}^d_{\rm nc}$ is $\bU_n^d$ and for a fixed 
$d$-tuple $(U_1, U_2, \ldots ,U_d)$ of $n \times n$ unitary matrices, the function $\Theta(zU_1, zU_2, \ldots zU_d)$ is a matrix valued rational function on 
the open unit disc $\mathbb D$ which has radial limits at all points of the unit circle (a consequence of the disc version of the classical 
Kalman-Yakubovich-Popov lemma), i.e., $\lim_{r \uparrow 1} \Theta(rU_1, rU_2, \ldots rU_d)$ exists. An nc-rational function $\Theta$ defined on $\mathbb{D}^d_{\rm nc}$ is 
called {\em inner} if $f(\X) \in \bU_n$ for almost all $\X \in \bM_n^d \cap \partial\mathbb{D}^d_{\rm nc}$ with respect to the natural Haar measure of the group $\bU_n^d$.

Artin's famous solution to Hilbert's 17th problem states that for $d \ge 2$, a real polynomial $p$ in $d$ variables is non-negative on $\mathbb R^d$ if and only if there is a finite number of rational functions $q_j$ satisfying $p - \sum_{\text{finite}} q_j^2$. In a remarkable result which has initiated a lot of research, it was proved in \cite{He02} that in the noncommutative case rational functions are not needed, i.e., a $*$-nc polynomial \emph{with the matrices having real entries and the coefficients being real} is non-negative if and only if $p = \sum_{\text{finite}} q_j^* q_j$. The complex case was resolved in \cite{McPu05}. This was followed by \cite{Pa18} and \cite{Vo21} where rational functions were dealt with. 

\begin{defn}
	A $*$-nc rational function $q$ is said to be \emph{hereditary} if 
	\[
	q \equiv \sum_{\text{finite}} r_i^* s_i,
	\]
	where each $r_i$ and $s_i$ is an nc rational function.
\end{defn}

Our first major result extends the positive sums of squares results known so far. This result was tacitly implied in \cite{Pa18}. We find that the approach there does not directly yield a complete proof. We therefore develop a different method that allows us to establish the result rigorously.

\begin{theorem} \label{Main1}
	Let $q$ be a hereditary rational nc function.
	If $q \geq 0$ on $\mathbb{D}^d_{\rm nc}$, then 
	\begin{align}
		q \equiv \sum_{\text{finite}} r_i^* (1-X_{i_k}^*X_{i_k}) r_i 
		+ \sum_{\text{finite}} s_i^* s_i
	\end{align}
for nc rational functions $r_i$ and $s_i$.
\end{theorem}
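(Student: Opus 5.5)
The plan is a Helton–McCullough style separation argument, carried out inside a finite-dimensional space determined by a realization of the rational functions involved. First I would put $q$ in matrix form. Write $q=\sum_i r_i^*s_i$ and collect all the $r_i,s_i$ into a column $F=(f_1,\dots,f_m)^T$ of nc rational functions regular on $\mathbb{D}^d_{\rm nc}$, so that $q=F^*HF$ for a constant selfadjoint matrix $H$. (Regularity should follow, after possibly changing representatives, from the fact that $q$ is defined and $\ge 0$ on the polydisc.) I would then take a realization
\[
F(\X)=C\Big(I-\sum_j A_jX_j\Big)^{-1}B
\]
(with $X_j$ acting by tensoring). This realization is needed to know that $W(\X)=(I-\sum_jA_jX_j)^{-1}B$ satisfies $W=B+\sum_j A_jX_jW$.

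Next I would define the finite-dimensional spaces that make the argument work. Let $V$ be the span of the constants and the entries of $W$, and let $V_1=V+\sum_j X_jV$. By the identity $W=B+\sum_jA_jX_jW$, left multiplication by $X_j$ maps $V$ into $V_1$, and every $f_k$ lies in $V$. Consider the convex cone
\[
\mathcal{C}=\Big\{\textstyle\sum s_i^*s_i+\sum r_i^*(1-X_{i_k}^*X_{i_k})r_i : s_i\in V_1,\ r_i\in V\Big\}
\]
inside the finite-dimensional real space of hereditary functions of the form $V_1^*V_1$. The goal is to show $q\in\mathcal{C}$, and this gives the finite sum with finitely many summands by Carathéodory's theorem on cones. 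I would first show that $\mathcal{C}$ is closed. The standard route is to find a functional strictly positive on $\mathcal{C}\setminus\{0\}$, for instance evaluation $\Lambda$ of the trace at a generic point near $0$ (a point of small norm) where the evaluation map on $V_1$ is injective. Closedness then follows because $\{c\in\mathcal{C}:\Lambda(c)=1\}$ is compact.

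The separation step goes as follows. If $q\notin\mathcal{C}$, Hahn–Banach gives a linear functional $L$ with $L\ge0$ on $\mathcal{C}$ and $L(q)<0$; after perturbing by a small multiple of $\Lambda$, we may take $L>0$ on nonzero squares. Define $\langle f,g\rangle=L(g^*f)$ on $V_1$; this is a genuine inner product. Define $T_j:V\to V_1$ by $T_jf=X_jf$ and compress to $V$ if needed, using the projection from $V_1$. The condition $L(r^*(1-X_j^*X_j)r)\ge0$ gives $\|T_jr\|\le\|r\|$, so the $T_j$ are contractions on a finite-dimensional Hilbert space. The GNS identity then reads $L(g^*f)=\langle f(\T)\mathbf 1,g(\T)\mathbf 1\rangle$ for $f,g\in V$. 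This is checked inductively through the relation $W=B+\sum_jA_jX_jW$; compressing to $V$ is harmless because $f\in V$ only requires one application of $X_j$ at a time. Consequently $L(q)=\langle H F(\T)\mathbf 1,F(\T)\mathbf 1\rangle$.

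The main obstacle is that $\T$ lies only in the closed polydisc, and $I-\sum A_jT_j$ might not be invertible, so $F(\T)$ could fail to make sense. I would get around this by working with $r\T$ for $r<1$, or equivalently by replacing $L$ by the functional coming from the strictly scaled tuple. Then $q(r\T)\ge0$, and letting $r\uparrow1$ should yield $L(q)\ge0$, a contradiction. Making this limit rigorous is the delicate step, because the GNS identity is exact only at $\T$ itself. The first thing I would try is to build the scaling into the cone: apply the argument to $q_\epsilon=q+\epsilon\sum f_k^*f_k$ with the contractions strict. A strictly contractive point avoids the singularity, and $\epsilon\to0$ can then be controlled by closedness of $\mathcal{C}$.
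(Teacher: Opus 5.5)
\paragraph{A genuine gap at the boundary step.} Up to the evaluation step your plan matches the paper's two lemmas: the same cone built from $V$ and $V_1=V+\sum_j X_jV$, the same GNS space, the same compression $Y_j=P_VT_j$, and the same use of $W=B+\sum_jA_jX_jW$. Your closedness argument via a trace-evaluation functional that is strictly positive on $\mathcal C\setminus\{0\}$ is sound. The gap is exactly where you flag it, and the fix you propose does not close it.

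Let $\hat W\in\mathbb{C}^k\otimes V$ be the vector of entries of $W$ and $M=I-\sum_jA_j\otimes Y_j$. What the construction actually yields is $M\hat W=B\otimes\mathbf 1$, and nothing prevents $M$ from being singular. For $r<1$ one has $I-r\sum_jA_j\otimes Y_j=rM+(1-r)I$, so
\[
W(r\underline{Y})\mathbf 1=(rM+(1-r)I)^{-1}M\hat W .
\]
As $r\uparrow1$ this tends to $\hat W$ minus its spectral component at $0$ when $0$ is a semisimple eigenvalue, and it can diverge when it is not. Hence $\langle q(r\underline{Y})\mathbf 1,\mathbf 1\rangle$ need not tend to $L(q)$, and no contradiction follows.

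Replacing $q$ by $q+\epsilon\sum_kf_k^*f_k$ does not affect this. The separating functional is still only nonnegative on $r^*(1-X_j^*X_j)r$, so the GNS tuple is still only a non-strict contraction and $M$ can still be singular. The perturbation changes the function, not the point at which you evaluate it. Closedness of $\mathcal C$ only handles $\epsilon\to0$, which is not the problematic limit.

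\paragraph{The missing idea.} You need to shrink the polydisc before separating. The paper does this by proving its separation lemma only for $q$ regular on $\{\|\underline{X}\|<1+\epsilon\}$ and applying it to $q_r(\underline{X})=q(r\underline{X})$.

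In your fixed-space setup this reads as follows. For $\rho<1$, let $\mathcal C_\rho$ be generated by $s^*s$ with $s\in V_1$ and by $r^*(\rho^2-X_j^*X_j)r$ with $r\in V$.
\begin{enumerate}
\item A functional $L\ge0$ on $\mathcal C_\rho$ forces $\|Y_j\|\le\rho<1$. So $\underline{Y}$ lies in the open polydisc, and $M$ is invertible; this uses your regularity assumption on the realization of $F$, which is the same tacit assumption the paper makes on $\Lambda$.
\item Then $\hat W=W(\underline{Y})\mathbf 1$ and $L(q)=\langle q(\underline{Y})\mathbf 1,\mathbf 1\rangle\ge0$, a contradiction. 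Hence $q\in\mathcal C_\rho$ for every $\rho<1$.
\item Now let $\rho\uparrow1$. Carath\'eodory's theorem bounds the number of terms independently of $\rho$.
\item Evaluate the trace at your injective point $X_0$, taken with $\|X_0\|\le\tfrac12$. This gives $\operatorname{tr}q(X_0)\ge\sum\|s_i(X_0)\|^2+(\rho^2-\tfrac14)\sum\|r_i(X_0)\|^2$ in Hilbert--Schmidt norm. By injectivity of evaluation at $X_0$ on $V_1$, this bounds all coefficients uniformly in $\rho$.
\item A convergent subsequence in the fixed finite-dimensional spaces $V$ and $V_1$ then gives the representation with the factors $1-X_{i_k}^*X_{i_k}$.
\end{enumerate}

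This last limit is not covered by closedness of $\mathcal C$, because the cones $\mathcal C_\rho$ change with $\rho$. The uniform coefficient bound is the essential input; the paper obtains it through Montel's theorem.
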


This sum of squares formula leads to a realization of contractive nc-rational (inner) functions on $\mathbb{D}^d_{\rm nc}$ in terms of finite realizations. In the commutative case, this realization was proved in \cite{Kummert}. See \cite{Kn21} for a recent exposition. We note that for a rational formal power series (FPS) (in the language of \cite{AlKa06}) $f$ on $\mathbb{D}_{\mathrm{nc}}^d$ satisfying \eqref{eq:contractive}, it is shown in \cite{AlKa06} that $f$ admits a finite realization for which the colligation matrix is unitary if $f$ is inner. See also \cite[Theorem 2.1]{BaGrMa05}. In the present work, we provide a proof based on our Theorem \ref{Main1} and the techniques developed herein which are very different from those in \cite{AlKa06, BaGrMa05}. We obtain finite dimensional realizations via a prior constraint on the vector space of functions, as opposed to a controllability-observability argument as a tool to reduce to finite dimensions.

\begin{theorem}\label{Main2}
	 Suppose $f$ is an nc rational function such that 
	\begin{align}\label{eq:contractive}
		1 - f(\Z)^* f(\Z) \geq 0 \quad \text{on } \mathbb{D}^d_{\rm nc}.
	\end{align}
	Then there exist a finite-dimensional Hilbert space $\cM$ and a contraction 
	\[
	T=\begin{bmatrix}
		A & B \\[3pt]
		C & D
	\end{bmatrix} :
	\begin{matrix}
		\C \\ \oplus \\ \cM
	\end{matrix}
	\to
	\begin{matrix}
		\C \\ \oplus \\ \cM
	\end{matrix}
	\]
	such that 
	\begin{align}\label{eq:realization-form}
		f(\Z) = A + B\, \Delta(\Z)\, (1 - D\, \Delta(\Z))^{-1} C
	\end{align}
	on $\mathbb{D}^d_{\rm nc}$, where $\Delta(\Z)$ is a block diagonal $\dim\cM \times \dim\cM$ matrix with the diagonal entries coming from $(Z_{1},\cdots, Z_{d})$.  More explicitly, for every $\Z \in \mathbb{D}^d_{\rm nc}\cap\bM_n^d$,
	\begin{align*}
		\begin{matrix}
			I_{\C} \\ \otimes \\ f(\Z)
		\end{matrix}=\begin{matrix}
			A \\ \otimes \\ I_{\C^n}
		\end{matrix} + 
		\begin{matrix}
			B \\ \otimes \\ I_{\C^n}
		\end{matrix}
		\Delta(\Z) \left(\begin{matrix}
			I_{\cM} \\ \otimes \\ I_{\C^n}
		\end{matrix} -   \begin{matrix}
			D \\ \otimes \\ I_{\C^n}
		\end{matrix}
		\Delta(\Z)\right)^{-1} \begin{matrix}
			C \\ \otimes \\ I_{\C^n}
		\end{matrix}.
	\end{align*}
The formula \eqref{eq:realization-form} is referred to as the \emph{realization formula} for $f$, where $T$ denotes the corresponding \emph{colligation 
matrix}.


\end{theorem}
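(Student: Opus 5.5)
The plan is the standard lurking isometry argument, with Theorem~\ref{Main1} supplying a finite Agler-type decomposition. First I apply Theorem~\ref{Main1} to the hereditary rational function $q(\Z,\W)$ obtained by polarizing $1-f(\Z)^*f(\Z)$. Concretely, $q = 1 - f^*f$ is hereditary: take $r_1 = s_1 = 1$, together with the term $(-f)^*f$. It is nonnegative on $\mathbb{D}^d_{\rm nc}$ by \eqref{eq:contractive}. Theorem~\ref{Main1} then gives finitely many nc rational functions $r^{(k)}_j$ ($k=1,\dots,d$), grouped by the variable $X_k$ appearing in the weight, and $s_i$, with
\[
1 - f^*f = \sum_{k=1}^d \sum_{j} r^{(k)*}_j (1 - X_k^*X_k) r^{(k)}_j + \sum_i s_i^* s_i .
\]
I collect these into column vectors $R_k = (r^{(k)}_j)_j$ and $S=(s_i)_i$, all of finite length. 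This finiteness is exactly what will make $\cM$ finite dimensional.

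Next I rearrange the identity as an equality of Gram expressions:
\[
1 + \sum_k (X_kR_k)^*(X_kR_k) = f^*f + \sum_k R_k^*R_k + S^*S .
\]
The obstacle to using this directly is the term $S^*S$, which carries no variable weight. I plan to absorb it by writing $S^*S = S^*(1-X_1^*X_1)S + (X_1S)^*(X_1S)$. The pure square then moves to the left side, and $S$ joins the $X_1$-block of $R$. Then
\[
1 + \sum_k (X_kR_k)^*(X_kR_k) = f^*f + \sum_k R_k^*R_k ,
\]
with $R_k$ enlarged accordingly. To treat the weights uniformly on the left, I set $\cM = \bigoplus_k \C^{m_k}$ and $H = \bigoplus_k R_k$, an $\cM$-valued nc rational function, and I write $\Delta(\Z)H(\Z)$ for the vector whose $k$-th block is $Z_k R_k(\Z)$. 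With this notation the identity reads
\[
\begin{bmatrix}1\\ \Delta H\end{bmatrix}^*\begin{bmatrix}1\\ \Delta H\end{bmatrix} = \begin{bmatrix} f\\ H\end{bmatrix}^*\begin{bmatrix} f\\ H\end{bmatrix},
\]
first as rational functions and then, after polarization, at pairs of points $\Z,\W$ of the same size. Polarization is legitimate because both sides are hereditary rational expressions agreeing on the nc domain, so the identity holds as $*$-rational functions. I then evaluate at $\Z\in\bM_n^d$ applied to vectors $v\in\C^n$.

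The lurking isometry step comes next. I define $V$ on the span $\mathcal{L}\subset (\C\oplus\cM)\otimes$ (suitable spaces) of the vectors $(1\otimes v,\ \Delta(\Z)H(\Z)v)$ by sending them to $(f(\Z)v, H(\Z)v)$. The Gram identity shows $V$ is well defined and isometric. The main obstacle here is that the vectors live in $\C^n$-amplified spaces for varying $n$, while I need a single scalar matrix $T$ on $\C\oplus\cM$. To handle this I would work with coefficient vectors. Expand $H$ and $f$ as formal power series about $0$; they are regular there after a similarity/translation if needed, and hereditary identities become identities of coefficients. Define the subspace of $\C\oplus\cM$ spanned by the coefficient vectors of $(1,\Delta H)$ against words, and define $V$ on it word-by-word; the polarized identity gives isometry. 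Alternatively, one can use the free-algebra/Fock-space model, noting nc functions are determined by evaluations on the free shift tuple.

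Finally, I extend $V$ to a contraction $T$ on all of $\C\oplus\cM$ by setting it to $0$ on the orthogonal complement of the domain; $\cM$ is already finite dimensional, so no dilation enlarging the space is needed. Writing $T=\begin{bmatrix}A&B\\C&D\end{bmatrix}$, the relation $T(1,\Delta H)=(f,H)$ gives
\[
f = A + B\Delta H, \qquad H = C + D\Delta H .
\]
Since $\|D\Delta(\Z)\|<1$ on $\mathbb{D}^d_{\rm nc}$, solving gives $H=(1-D\Delta)^{-1}C$, and hence \eqref{eq:realization-form}. The tensor form follows by writing everything in the amplified form.
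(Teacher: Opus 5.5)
Your overall scheme (Theorem~\ref{Main1}, polarization, a lurking map $(1,\Delta H)\mapsto(f,H)$, extension by $0$, then solving for $H$) is the paper's. However, the step meant to dispose of the pure-square term $S^*S$ is false as written. The identity $S^*S=S^*(1-X_1^*X_1)S+(X_1S)^*(X_1S)$ does not eliminate the pure square; it only trades $S$ for $X_1S$. When you append $S$ to the $X_1$-block of $R$, the left-hand Gram expression $1+\sum_k(X_kR_k)^*(X_kR_k)$ acquires the new term $(X_1S)^*(X_1S)$. The $S^*S$ added on the right, by contrast, was already present in the original identity. With $R_1$ enlarged, what actually holds is
\[
1+\sum_k (X_kR_k)^*(X_kR_k)=f^*f+\sum_k R_k^*R_k+(X_1S)^*(X_1S),
\]
so your displayed identity fails whenever $S\neq 0$. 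For instance, with $d=1$, $f=0$ and the decomposition $1=1^*1$, it would assert $1+X^*X=1$. Consequently the lurking map $V$ is \emph{not} isometric, and your claim that the Gram identity makes $V$ isometric is wrong.

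The proof survives only because the error has the favourable sign. The discrepancy $(X_1S)^*(X_1S)$ is itself a Gram term, so after polarization you get $\|\sum_j c_jx_j\|^2=\|\sum_j c_jVx_j\|^2+\|\sum_j c_jw_j\|^2$, where the $w_j$ are the corresponding vectors built from $X_1S$. Hence $V$ is well defined and contractive, and a contraction is all Theorem~\ref{Main2} asks for. Once you see this, the absorption step is superfluous: drop $S$ altogether. The identity $1+\sum_k(X_kR_k)^*(X_kR_k)=f^*f+\sum_kR_k^*R_k+S^*S$ already makes $(1,\Delta R)\mapsto(f,R)$ contractive, with a smaller state space than the paper's $\C^{2N}$.

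The paper handles $S$ differently. It keeps the $s_i$ as extra state coordinates carrying weight $0$ in $\widehat\Delta(\Z)$, so that its lurking map is exactly isometric on $\mathcal S$. It then removes the zero weights via $\widehat\Delta=P\Delta$ and $T=\tilde T\,\mathrm{diag}(1,P)$.

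For the amplification issue, the paper pairs with $\alpha^*$ (Lemma~\ref{lem:separate var} with $C=\beta\alpha^*$). This puts the vectors $\alpha^*v\oplus(I\otimes\alpha^*)\widehat\Delta(\Z)r(\Z)v$ directly into $\C\oplus\C^{2N}$. Your power-series route can also be made to work. But two steps you merely assert are exactly what Lemma~\ref{lem:separate var} is needed for: the polarization step, and the passage from hereditary identities to coefficient identities.
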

\begin{cor}\label{thm:realization-inner-rational}
A nc rational function $f$ on $\mathbb{D}^d_{\rm nc}$ is \emph{inner} if and only if there exist a finite unitary colligation matrix $T$ satisfying \eqref{eq:realization-form}.
\end{cor}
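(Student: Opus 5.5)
We prove the two implications separately.

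\emph{Unitary colligation implies inner.} Suppose $T$ is a finite unitary colligation with $f(\Z)=A+B\Delta(\Z)(1-D\Delta(\Z))^{-1}C$. The standard lurking-isometry computation gives, for $\Z\in\mathbb{D}^d_{\rm nc}\cap\bM_n^d$,
\begin{align*}
1-f(\Z)^*f(\Z)=G(\Z)^*\bigl(1-\Delta(\Z)^*\Delta(\Z)\bigr)G(\Z), \qquad G(\Z)=(1-D\Delta(\Z))^{-1}C .
\end{align*}
To obtain it, write $h=G(\Z)$. The realization says $T\begin{bmatrix}1\\ \Delta h\end{bmatrix}=\begin{bmatrix}f\\ h\end{bmatrix}$, with all blocks tensored with $I_{\C^n}$. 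Since $T\otimes I$ is unitary, the norms agree, which is exactly the identity above. This identity holds as an identity of $*$-nc rational functions, so it persists wherever the expression is defined.

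Now fix $\underline{U}\in\bU_n^d$ and consider $\lambda\mapsto \Delta(\lambda\underline{U})$, for which $1-D\Delta(\lambda\underline U)$ is invertible for $|\lambda|<1$. The function $\lambda\mapsto\det(1-D\Delta(\lambda\underline U))$ is a nonzero polynomial in $\lambda$ (its value at $0$ is $1$), so it has only finitely many zeros on the circle. Since $\Delta(e^{i\theta}\underline U)$ is unitary, the right-hand side vanishes at all other boundary points. Thus $f(e^{i\theta}\underline U)$ is unitary except for finitely many $\theta$.

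To pass to almost every $\underline U$, use the fact that $\underline U\mapsto\det(1-D\Delta(\underline U))$ is a real-analytic function on the connected group $\bU_n^d$, and it is not identically zero: it is nonzero at $r\underline U$, and one can argue via the circle action. Its zero set therefore has Haar measure zero. Off that set, $f$ is defined and unitary, and it agrees with the radial limit. Hence $f$ is inner.

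\emph{Inner implies unitary colligation.} Here we use Theorem \ref{Main1}. Since $f$ is inner, it is contractive on $\mathbb{D}^d_{\rm nc}$: by the maximum principle on each slice $\lambda\mapsto f(\lambda\underline U)$ together with the radial limits, $1-f^*f\ge0$. The function $1-f^*f$ is hereditary, so Theorem \ref{Main1} gives
\begin{align*}
1-f^*f=\sum_{k} r_k^*(1-X_{i_k}^*X_{i_k})r_k+\sum_j s_j^*s_j .
\end{align*}
The key step is to show that the pure square terms vanish, i.e.\ $\sum_j s_j^*s_j\equiv0$. Evaluate at $\lambda\underline U$ and let $|\lambda|\to1$. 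The left side tends to $0$ for almost every $\underline U$, and so do the $r_k$-terms, provided the $r_k$ have boundary limits. Each $s_j$ is then an nc rational function whose boundary values vanish a.e., so $s_j\equiv0$.

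This is the main obstacle. The $r_k$ might have poles on the boundary, so in that case we would instead argue via the rational matrix-valued slice functions: a rational function in $\lambda$ that is bounded on the disc, with the identity $0\le\sum_j s_j^*s_j\le 1-f^*f$, forces the $s_j$ slices to be bounded, and hence to vanish on the circle a.e. The one-variable identity theorem then gives $s_j=0$.

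With only the $r_k$-terms remaining, we run the lurking isometry as in the proof of Theorem \ref{Main2}. Let $\cM$ be the span of the coefficient functions of $r_k$, grouped by the variable $i_k$, so that $\Delta$ is well defined. The identity
\begin{align*}
1+\sum_k (X_{i_k}r_k)^*(X_{i_k}r_k)=f^*f+\sum_k r_k^*r_k
\end{align*}
defines an isometry $\begin{bmatrix}1\\ \Delta r\end{bmatrix}\mapsto\begin{bmatrix}f\\ r\end{bmatrix}$ between subspaces of the finite-dimensional space $\C\oplus\cM$. Because the dimensions are finite, the domain and codomain spaces have equal dimension, so the isometry extends to a unitary $T$. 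Reading off the blocks of $T$ yields \eqref{eq:realization-form}, exactly as in Theorem \ref{Main2}.
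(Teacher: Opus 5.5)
Your overall architecture is the paper's. For ``unitary $\Rightarrow$ inner'' you use the lurking-isometry identity $1-f^*f=G^*(1-\Delta^*\Delta)G$ together with Haar-nullness of $\{\det(1-D\Delta(\underline U))=0\}$. You get nullness from real-analyticity on the connected group $\bU_n^d$, where the paper uses rotation invariance and Fubini. Either way, what shows the determinant is not identically zero on $\bU_n^d$ is that each circle $\{e^{i\theta}\underline U\}$ meets the zero set only finitely often, not its value at $r\underline U\notin\bU_n^d$. For the converse you apply Theorem~\ref{Main1} to $1-f^*f$, kill the $s_j$, and extend the isometry to a unitary on $\C\oplus\C^N$. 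The first direction is fine.

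The gap is in the converse, at the two places where you pass from radial slices to all of $\mathbb{D}^d_{\rm nc}$. The slices $\lambda\mapsto f(\lambda\underline U)$ give $\|f\|\le 1$ only on $\{\lambda\underline U:|\lambda|<1,\ \underline U\in\bU_n^d\}$. This is a thin subset of $\mathbb{D}^d_{\rm nc}\cap\bM_n^d$: for $d=1$, $n=2$ it misses $\diag(1/2,0)$. Theorem~\ref{Main1}, however, needs $1-f^*f\ge 0$ at every point, since its proof evaluates at the arbitrary contractive tuples $\Y$ produced by the GNS construction. Likewise, the one-variable identity theorem only shows that $s_j$ vanishes on these slices, not that $s_j\equiv 0$.

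The missing ingredient is that $\bU_n^d$ is a distinguished boundary for $\mathbb{D}^d_{\rm nc}\cap\bM_n^d$: every point lies on an analytic disc with boundary in $\bU_n^d$. Concretely, given $\Z$ with $\|\Z\|<r<1$, write $Z_i/r=V_i\Sigma_iW_i$ (singular value decomposition, $\Sigma_i=\diag(\sigma_{i1},\dots,\sigma_{in})$). Set $b_{ij}(\mu)=(\mu+\sigma_{ij})/(1+\sigma_{ij}\mu)$ and $\gamma(\mu)=\bigl(V_i\,\diag(b_{i1}(\mu),\dots,b_{in}(\mu))\,W_i\bigr)_{i=1}^d$. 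Then $\gamma(0)=\Z/r$, $\gamma(\mathbb{T})\subset\bU_n^d$, and $\mu\mapsto f(r\gamma(\mu))$ is holomorphic on a neighbourhood of $\overline{\mathbb{D}}$.

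Your slice argument gives $\|f(r\underline U)\|\le 1$ for a.e.\ $\underline U$, and then for all $\underline U\in\bU_n^d$ by continuity at the interior points $r\underline U$. The maximum principle then yields $\|f(\Z)\|\le1$. In the same way, $s_j(r\underline U)=0$ for all $\underline U$ forces $s_j(r\gamma(\cdot))\equiv 0$ on $\mathbb{T}$, hence $s_j(\Z)=0$.

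With this step inserted your proof closes. The paper itself applies Theorem~\ref{Main1} without checking $1-f^*f\ge0$ and asserts $s_i=0$ without argument, so your route is otherwise the paper's, only more explicit.

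Two small points. First, take $\cM=\C^N$ with one coordinate per term $r_k$ and $\Delta(\Z)=\diag(Z_{i_1},\dots,Z_{i_N})$, as in the paper, rather than a span of coefficient functions. Second, the isometry $\mathcal S\to\mathcal R$ comes from the polarized two-point identity of Lemma~\ref{lem:separate var}, not from the diagonal identity you display.
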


\subsection{Carath\'eodory's approximation}
For $\Omega = \mathbb D$ or $\Omega = \mathbb D^2$, a holomorphic matrix valued function $F$ on $\Omega$ is called $rational$ if every entry is a rational 
function with the poles off $\Omega$ and is called $inner$ if the boundary values of the function on the unit circle/torus are unitary matrices almost 
everywhere.
	
	Carath\'eodory, in his study of holomorphic functions from the open unit disc $\mathbb D = \{ z \in \mathbb C : |z| < 1 \}$ of the complex plane to the 
closed unit disc $\overline{\mathbb D}$, proved the following theorem, see Section 284 in \cite{Ca54}. Later, Rudin generalized this to functions defined on the 
polydisc $\mathbb D^n$, see \cite{Ru69}. We shall state it for functions taking values in matrices. 

\vspace*{2mm}

\noindent	\textbf{Theorem CR.} \quad (Carath\'eodory and Rudin)  \quad
		{\em Let $\Omega$ denote the open unit disc $\mathbb D$ or the bidisc $\mathbb D^2$. Any holomorphic function $\varphi$ defined on $\Omega$, taking 
values in square matrices (of fixed size) and having norm $\| \varphi \| (\bydef \sup \{ \| \varphi(z) \| : z \in \Omega \})$ no larger than $1$ can be 
approximated (uniformly on compact subsets) by rational inner functions.}

\vspace*{2mm}

We stated it only for the disc or the bidisc for two reasons. First, in the polydisc, one needs to restrict to the so-called Schur-Agler class. This matrix-valued version of Rudin’s result is not
known if $\varphi$ is in Schur class. Secondly, an application of unitary dilation of a contraction proves Theorem CR, see \cite{AlBhJiKu23}. We are going to follow this route of application of Hilbert space operator theory. Moreover, in the noncommutative case this technique will work in the noncommutative (nc) {\em polydisc}. A major tool is the finite (dimensional) realization obtained above. Note that the nc-polydisc is closed under direct sum and unitary conjugation. 
\begin{defn}
A {\em free function} or an {\em nc-function} $f$ on $\mathbb{D}^d_{\rm nc}$ is an $\mathbb M^1$ valued function which is 
\begin{enumerate}
\item graded, that is, if $\X \in \bM_n^d \cap \mathbb{D}^d_{\rm nc}$, then $f(\X) \in \bM_n$ and
\item satisfies the intertwining property, that is, if $\X$ and $\Y$ are in $\mathbb{D}^d_{\rm nc}$ and $s$ is any (possibly rectangular) matrix such that $s\X = \Y s$, 
    then $s f(\X) = f(\Y) s$.
    \end{enumerate} 
 
 We say that an nc-function $f$ on $\mathbb{D}^d_{\rm nc}$ is {\em holomorphic} on $\mathbb{D}^d_{\rm nc}$ if for each $n \ge 1$, the map defined on $\mathbb{D}^d_{\rm nc} 
 \cap \mathbb{M}^d_n$ by $x \mapsto f(x)$ is a holomorphic $\mathbb{M}_n^1$-valued function in the entries of $x$.
 \end{defn}
 It is a remarkable fact that if an 
 nc-function $f$ satisfies $\|f(\Z)\| \le 1$ for every $\Z \in \mathbb{D}^d_{\rm nc}$, then it is automatically nc-holomorphic on $\mathbb{D}^d_{\rm nc}$. This follows from Theorem 4.6 of \cite{AgMc15}. We are ready to state the following noncommutative version of Theorem CR.

\begin{theorem} \label{Main}
Let $f$ be an nc-holomorphic function on $\mathbb{D}^d_{\rm nc}$ satisfying $\|f(\Z)\| \le 1$ for every $\Z \in \mathbb{D}^d_{\rm nc}$. Then there exists a sequence $\{\Theta_n\}$ of 
nc-rational inner functions on $\mathbb{D}^d_{\rm nc}$ such that $f$ can be approximated by $\Theta_n$ uniformly on each compact (in du-topology, that is for each 
$n\in\N$, $K\cap \bM_n^d$ is compact in usual topology on $\bM_n^d$) subset $K$ of $\mathbb{D}^d_{\rm nc}$.
	
\end{theorem}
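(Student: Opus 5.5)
The plan is to combine the nc Schur–Agler realization with unitary dilation, along the lines of the operator-theoretic proof of Theorem CR in \cite{AlBhJiKu23}.

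\emph{Step 1: infinite realization.} Since $\|f(\Z)\|\le 1$ on $\mathbb{D}^d_{\rm nc}$, the free Schur–Agler theory (e.g.\ \cite{AgMc15} or Ball–Groenewald–Malakorn) gives a Hilbert space $\mathcal{M}=\mathcal{M}_1\oplus\cdots\oplus\mathcal{M}_d$ and a unitary colligation
\[
V=\begin{bmatrix} A & B\\ C & D\end{bmatrix}:\C\oplus\mathcal{M}\to\C\oplus\mathcal{M}
\]
such that
\[
f(\Z)=A+B\Delta(\Z)(1-D\Delta(\Z))^{-1}C,
\]
with $\Delta(\Z)=\bigoplus_i P_{\mathcal{M}_i}\otimes Z_i$. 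In general $\mathcal{M}$ is infinite dimensional.

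\emph{Step 2: finite compressions.} Choose increasing finite-dimensional subspaces $\mathcal{M}^{(k)}=\bigoplus_i\mathcal{M}_i^{(k)}$ with $\mathcal{M}_i^{(k)}\subset\mathcal{M}_i$ whose union is dense. Compress $V$ to $\C\oplus\mathcal{M}^{(k)}$; this gives a contraction $T_k$. Its realization $f_k$ is a contractive nc-rational function, because $\|D_k\Delta(\Z)\|<1$ on $\mathbb{D}^d_{\rm nc}$.

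\emph{Step 3: unitary dilation.} Apply the finite Halmos/Julia dilation
\[
U_k=\begin{bmatrix} T_k & (1-T_kT_k^*)^{1/2}\\ (1-T_k^*T_k)^{1/2} & -T_k^*\end{bmatrix}
\]
on $(\C\oplus\mathcal{M}^{(k)})\oplus(\C\oplus\mathcal{M}^{(k)})$. The extra copy of $\C$ must not be a new output channel, since the output has to stay scalar. So I would regroup the extra space $\C\oplus\mathcal{M}^{(k)}$ into the state space and assign each extra summand to some variable block, say the extra $\C$ to $\mathcal{M}_1$ and the extra $\mathcal{M}_i^{(k)}$ to $\mathcal{M}_i$. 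This yields a finite unitary colligation $W_k$ on $\C\oplus\tilde{\mathcal{M}}^{(k)}$. By Corollary~\ref{thm:realization-inner-rational}, its transfer function $\Theta_k$ is an nc-rational inner function.

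\emph{Step 4: convergence.} I then need to show $\Theta_k\to f$ uniformly on du-compact sets. Fix $n$ and a compact $K\subset\mathbb{D}^d_{\rm nc}\cap\bM_n^d$, so $\|\Z\|\le r<1$ on $K$. Expand each transfer function as a Neumann series
\[
A+\sum_{m\ge0}B\Delta(D\Delta)^mC,
\]
which converges uniformly on $K$, with tails bounded by $r^{N}$ uniformly in $k$ because all colligations are contractions. It then suffices to show that each word coefficient $B_\bullet D_\bullet\cdots D_\bullet C_\bullet$ of $\Theta_k$ converges to the corresponding coefficient of $f$.

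This is the main obstacle. The dilated colligation $W_k$ contains the defect terms $(1-T_kT_k^*)^{1/2}$ and $(1-T_k^*T_k)^{1/2}$, and a priori these need not tend to zero. The first thing I would try is the trick of \cite{AlBhJiKu23}: choose the exhaustion so that the defects are small on the relevant vectors. Because $V$ is unitary, $\|(1-T_k^*T_k)^{1/2}x\|^2=\|(1-P_k)Vx\|^2$. This tends to $0$ for every fixed $x$, but only strongly, not in norm. That is enough if each coefficient up to length $N$ involves only finitely many fixed vectors $C1, DC1,\ldots$. The paths through the dilation block that return to the output pick up at least one defect factor applied to such a vector, and so vanish as $k\to\infty$ for each fixed word length.

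If the strong-convergence bookkeeping fails because $-T_k^*$ feeds back, I would instead take $\mathcal{M}^{(k)}$ to contain $\mathrm{span}\{(DP)^jC1, (D^*P)^jB^*1: j\le k\}$. The compressions then reproduce the coefficients of $f$ exactly up to length $k$, and the defect paths enter only at words of length greater than $k$. The tail estimate $r^k$ then gives uniform convergence on $K$.
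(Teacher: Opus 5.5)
Your proposal is correct, but it takes a genuinely different route from the paper.

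\textbf{The paper's route.} The paper first uses \cite[Theorem 4.1]{Ko23} to reduce to the case where $f$ is an nc polynomial. It then applies its own Theorem~\ref{Main2}, and hence the Positivstellensatz of Theorem~\ref{Main1}, to obtain a \emph{finite} contractive colligation $T$ for $f$. Finally it passes to an $m$-step unitary dilation $U_m$, with $T^j=P_{\H}U_m^j|_{\H}$ for $j\le m$. By \eqref{eq:main_equality}, the coefficients of $f_m$ and $f$ then agree \emph{exactly} up to order $m-2$, and a geometric tail bound finishes the proof.

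\textbf{Your route.} You start from the infinite-dimensional free Schur--Agler realization, compress it to finite-dimensional subspaces, and use only a one-step Halmos dilation. So the coefficients of $\Theta_k$ agree with those of $f$ only in the limit.

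Your first, strong-convergence version already works, and your worry about $-T_k^*$ is unfounded. Argue by induction on word length: the state after each step has the form $x_k\oplus y_k$, with $x_k\to x$ in norm ($x$ the corresponding vector for $V$) and $\|y_k\|\to 0$. The key estimate is that for $u_k\in\C\oplus\mathcal M^{(k)}$ with $u_k\to u$ in norm,
\[
\|(1-T_k^*T_k)^{1/2}u_k\|^2=\|u_k\|^2-\|P_kVu_k\|^2\to\|u\|^2-\|Vu\|^2=0 .
\]
Meanwhile $(1-T_kT_k^*)^{1/2}$ and $-T_k^*$ are contractions that act only on the already small $y_k$. Only $V^*V=I$ is used, so an isometric realization as in \cite{AgMc15} suffices.

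Your fallback also works, provided $\mathcal M^{(k)}$ contains every vector $P_{\mathcal M_{i_j}}D\cdots DP_{\mathcal M_{i_1}}C1$ with $j\le k$ (all words, not powers of a single $DP$), and is split compatibly with $\bigoplus_i\mathcal M_i$. That version is the direct analogue of \eqref{eq:main_equality}.

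\textbf{What each buys.} Your route bypasses Theorems~\ref{Main1} and~\ref{Main2} and the polynomial reduction via \cite{Ko23}, together with the diagonal argument that reduction tacitly requires. It needs only the easy half of Corollary~\ref{thm:realization-inner-rational}, but it imports the infinite-dimensional realization theorem as a black box. The paper's route showcases its finite-dimensional realization and gets exact coefficient matching, with no limiting argument at the level of coefficients.
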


\section{Finite sums of squares and realization}
	
\begin{defn}
    A descriptor realization for an nc rational function $r$ is
       $$r(\X)=b^*\Lambda(\X)^{-1}c$$
    where $\Lambda$ is a matrix valued affine linear function of $X_1,\ldots, X_d, X_1^*, \ldots, X_d^*.$
\end{defn}
For example
    $$-X^{-2}= \begin{pmatrix} 1 \\ 0 \end{pmatrix}^*\begin{pmatrix} 0 & X \\ X & 1 \end{pmatrix}^{-1}\begin{pmatrix} 1 \\ 0 \end{pmatrix}.$$
Given a descriptor realization for two functions $f, g$ we may easily find realizations for $fg, f+g, f^{-1},$ (see e.g. \cite{Vo18, KaVi14}).
Thus, every rational function possesses a descriptor realization. If $r$ is defined on a neighborhood of $0,$ we may take $\Lambda (0)=1$ and shall adopt this 
convention henceforth.

By applying realizations term by term we see the following.
\begin{rem} \label{Lambda}
    Every hereditary $*$-nc rational function has a realization of the form:
        $$q(\X)=(b \otimes 1)^*\Lambda(\X)^{-1*}M\Lambda(\X)^{-1}(b \otimes 1)$$
    where $b \in \mathbb C^N$ and $\Lambda$ is an $N \times N$ matrix-valued affine analytic linear function for some $N$ and $M$ is some $N \times N$ matrix.
\end{rem}

Lemma 3.9 of \cite{Vo18} shows that the entries of $\Lambda^{-1}$ are nc rational functions. We shall now work towards a Positivstellensatz.

\begin{defn}
    
    Let $\Lambda$ be a square matrix of affine linear analytic nc polynomials. 	Define the principal analytic functions associated to $\Lambda$
    to be the set of nc analytic rational functions spanned by the entries of $\Lambda^{-1}$ and the constant polynomial $1.$ Call this vector space $\mathcal{V}_p$.

    The secondary analytic functions are the span of the principal analytic functions and $X_ih$ where $h$ is a principal analytic function. Call this vector space $\mathcal{V}_s$. 
    
    The principal hereditary functions are the span of $h^*k$ where $h, k$ principal analytic functions.

    The secondary hereditary functions are the span of $h^*k$ where $h, k$ secondary analytic functions. Call this vector space $\mathcal{V}$.

    The principal cone consists of elements of the form
        $h^*(1-X_i^*X_i)h$ where $h$ are principal analytic functions and $h^*h$ where $h$ are  secondary analytic functions. Call this cone $\mathcal{C}.$
\end{defn}
For example, if $\Lambda = \begin{pmatrix} 1 & X \\ 0 & 1 \end{pmatrix}$, then $\Lambda^{-1} = \begin{pmatrix} 1 & -X \\ 0 & 1 \end{pmatrix}.$
Thus the primary nc analytic rational functions in this case are spanned by $1, X,$ the secondary by $1, X, X^2.$

The principal cone sits inside the finite dimensional vector space $\mathcal V$ of all secondary hereditary functions on which there is a canonical topology of uniform convergence on closed nc sub-polydiscs.

\begin{lem}
	The principal cone $\mathcal{C}$ is closed in $\mathcal{V}$.
\end{lem}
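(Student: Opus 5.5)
We argue via the standard Carathéodory compactness argument for finitely generated convex cones in a finite-dimensional space. Let $\mathcal{V}_p$ and $\mathcal{V}_s$ be the finite-dimensional spaces of principal and secondary analytic functions. Fix bases $h_1,\dots,h_m$ of $\mathcal{V}_p$ and $g_1,\dots,g_\ell$ of $\mathcal{V}_s$, and set $\vec h=(h_1,\dots,h_m)^T$, $\vec g=(g_1,\dots,g_\ell)^T$. Every element of $\mathcal{C}$ is a finite sum of the generators. Expanding each $h$ and each secondary function in these bases shows that every element of $\mathcal{C}$ can be written as
\[
\Phi(P_1,\dots,P_d,Q)=\sum_{i=1}^d \vec h^{\,*}(1-X_i^*X_i)^{\oplus}P_i\,\vec h+\vec g^{\,*}Q\,\vec g ,
\]
where $P_i\in\bM_m$ and $Q\in\bM_\ell$ are positive semidefinite. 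Conversely, every such expression lies in $\mathcal{C}$: factor each $P_i=\sum v v^*$ and $Q=\sum w w^*$. So $\mathcal{C}$ is the image of the closed convex cone $\mathcal{P}=\bM_m^{+}\times\cdots\times\bM_m^{+}\times\bM_\ell^{+}$ under the linear map $\Phi:\mathcal{P}\to\mathcal{V}$.

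It is a standard fact that the linear image of a closed cone is closed, provided $\Phi$ annihilates no nonzero element of the cone. Indeed, suppose $\Phi(\pi_k)\to q$. If $\|\pi_k\|$ were unbounded, normalizing would give a limit $\pi\in\mathcal{P}$ with $\|\pi\|=1$ and $\Phi(\pi)=0$. Otherwise a subsequence converges to some $\pi$, and then $q=\Phi(\pi)\in\mathcal{C}$. The topology on $\mathcal{V}$ does not matter, since $\mathcal{V}$ is finite-dimensional and all Hausdorff vector topologies on it coincide.

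So the key step is to show that $\Phi(\pi)=0$ with $\pi\in\mathcal{P}$ forces $\pi=0$. Every summand of $\Phi(\pi)$ is pointwise positive semidefinite on $\mathbb{D}^d_{\rm nc}$. Evaluate at $\X=0$ (or at $t\X$ with $\X$ near $0$, where everything is defined because $\Lambda(0)=1$). Since $1-X_i^*X_i>0$ there, a vanishing sum forces every term to vanish on a neighborhood of $0$ in every matrix level:
\[
(1-X_i^*X_i)^{1/2}\otimes P_i^{1/2}\,\vec h(\X)=0, \qquad Q^{1/2}\vec g(\X)=0 .
\]
The first gives $P_i^{1/2}\vec h(\X)=0$ near $0$ at all levels. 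Hence for every vector $v$ in the range of $P_i$, the function $v^*\vec h$ vanishes on an open set at every level, and therefore as an nc rational function. Linear independence of the $h_j$ then forces $v=0$, so $P_i=0$. The same argument, using the independence of the $g_j$, gives $Q=0$.

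The main obstacle is this independence step. We must make sure that "linearly independent as nc rational functions" is equivalent to being independent as functions on small neighborhoods of $0$ at all matrix levels. This is the identity theorem for nc rational functions regular at $0$: vanishing near $0$ at all levels means the formal power series expansion is zero, hence the function is zero. The subtlety is that "$v^*\vec h=0$" lives at matrix level $n$, where $v$ acts as $v\otimes I_n$. This is fine because the coefficients $v$ are scalar. We must also check that the neighborhood of $0$ lies within the domain of all the $h_j$ and $g_j$, which holds because $\Lambda(0)=1$ keeps $\Lambda(\X)$ invertible for small $\X$.
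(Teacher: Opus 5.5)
Your proof is correct, and its skeleton matches the paper's. Both write $\mathcal{C}$ as the image of $(\bM_m^+)^d\times\bM_\ell^+$ under the linear map $\Phi$ and then use a normalization/compactness argument. The paper normalizes by total trace to get the compact slice $D_1$ and invokes the fact that the cone over the compact set $K=\Phi(D_1)$ is closed; you normalize by the norm along a convergent sequence.

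The real difference is that you isolate and prove the condition that makes this work: $\Phi$ annihilates no nonzero element of the positive semidefinite cone, i.e.\ $0\notin\Phi(D_1)$. The paper never checks this, and the fact it quotes is false without it. For example, the cone over a closed disc in $\mathbb{R}^2$ tangent to the $x$-axis at $0$ is the open upper half-plane together with $\{0\}$, which is not closed.

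Your argument supplies exactly this missing step. Every summand is pointwise positive semidefinite near $\X=0$ at every level, since $1-X_i^*X_i>0$ and $\Lambda(\X)$ is invertible there. The identity principle and the linear independence of the chosen bases then force $P_i=0$ and $Q=0$. You also verify the reverse inclusion $\Phi(\mathcal{P})\subseteq\mathcal{C}$ by factoring positive semidefinite matrices, which the paper uses tacitly when it writes $\mathcal{C}=\mathrm{cone}(K)$.

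So your version is the more complete one. The paper's argument becomes correct once your injectivity step is inserted to guarantee $0\notin K$.
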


\begin{proof}
	Let $\dim \mathcal{V}_p = N$ and $\dim \mathcal{V}_s = M$, and fix bases
	\[
	\mathcal{V}_p = \mathrm{span}\{p_1,\dots,p_N\}, 
	\qquad 
	\mathcal{V}_s = \mathrm{span}\{s_1,\dots,s_M\}.
	\]
	For $h = \sum_{i=1}^M \beta_i s_i \in \mathcal{V}_s$, we have
	\[
	h^* h 
	= \sum_{i,j=1}^M \overline{\beta_i}\beta_j \, s_i^* s_j.
	\]
	Similarly, for $h = \sum_{k=1}^N \alpha_k p_k \in \mathcal{V}_p$,
	\[
	h^*(1 - X_i^* X_i)h
	= \sum_{k,\ell=1}^N \overline{\alpha_k}\alpha_\ell \, p_k^*(1 - X_i^* X_i)p_\ell.
	\]
	Let $\bM_N^+$ denote the cone of $N \times N$ positive semidefinite matrices. Then $(\overline{\alpha_k}\alpha_\ell) \in \bM_N^+$ and $(\overline{\beta_i}\beta_j) \in \bM_M^+$. Hence every element of $\mathcal{C}$ admits a representation of the form
	\begin{equation}\label{GeneralForm}
		\sum_{i=1}^d \sum_{k,\ell=1}^N A^{(i)}_{k\ell}\, p_k^*(1 - X_i^* X_i)p_\ell
		\;+\;
		\sum_{r,s=1}^M B_{rs}\, s_r^* s_s,
	\end{equation}
	with $A^{(i)} \in \bM_N^+$ and $B \in \bM_M^+$.
	
	Define the linear map
	\[
	\Phi : (\mathbb{M}_N)^d \times \mathbb{M}_M \to \mathcal{V}
	\]
	by sending $(A_1,\dots,A_d,B)$ to the expression in \eqref{GeneralForm}. Let $	D $ denote the closed convex cone $ (\bM_N^+ \times \bM_M^+) $  in $(\mathbb{M}_N)^d \times \mathbb{M}_M$. 	Consider the slice
	\[
	D_1 := \left\{ (A_1,\dots,A_d,B)\in D :
	\sum_{i=1}^d \mathrm{Tr}(A_i) + \mathrm{Tr}(B) = 1 \right\}.
	\]
	Since $D$ is closed and the trace is continuous, $D_1$ is compact. Set $K := \Phi(D_1) \subset \mathcal{V}$. By continuity of $\Phi$, the set $K$ is compact. 	
	For $(A_1,\dots,A_d,B)\in D$, define
	\[
	t := \sum_{i=1}^d \mathrm{Tr}(A_i) + \mathrm{Tr}(B) \ge 0.
	\]
	If $t>0$, then
	\[
	(A_1,\dots,A_d,B) = t(\tilde A_1,\dots,\tilde A_d,\tilde B),
	\quad (\tilde A_1,\dots,\tilde A_d,\tilde B)\in D_1,
	\]
	and hence
	\[
	\Phi(A_1,\dots,A_d,B) = t\,\Phi(\tilde A_1,\dots,\tilde A_d,\tilde B).
	\]
	It follows that
	\[
	\mathcal{C} = \{ t y : t \ge 0,\ y \in K \} = \mathrm{cone}(K).
	\]
	Since the cone generated by a compact set in a finite-dimensional vector space is closed, we conclude that $\mathcal{C}$ is closed.
\end{proof}

\begin{lem}
    Let $\epsilon > 0$. Let a $*$-nc hereditary rational function $q$ be defined on all of $\{\X \in \bM^d : \|\X\| < 1 + \epsilon \}$.  Then $q$  is 
    in the principal cone  if and only if $q(\X)$ is positive semidefinite for any $\X$ in $\{\X \in \bM^d : \|\X\| < 1\}$.
\end{lem}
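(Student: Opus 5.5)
We would prove the easy direction directly and the hard direction by a separation argument followed by a GNS-type construction on the finite-dimensional space $\mathcal{V}$, with the main difficulty being an evaluation identity for the entries of $\Lambda^{-1}$. Throughout, $\Lambda$ is the affine analytic pencil from Remark \ref{Lambda} attached to $q$, normalized so that $\Lambda(0)=1$. We read the hypothesis that $q$ is defined on $\{\|\X\|<1+\epsilon\}$ as saying that $\Lambda(\X)$ is invertible there. In particular the entries of $\Lambda^{-1}$ are defined on the closed polydisc $\{\|\X\|\le 1\}$. By Remark \ref{Lambda}, $q\in\mathcal{V}$.

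\emph{Easy direction.} If $q\in\mathcal{C}$, then each generator $h^*(1-X_i^*X_i)h$ with $h\in\mathcal{V}_p$ and each generator $h^*h$ with $h\in\mathcal{V}_s$ is visibly positive semidefinite on $\mathbb{D}^d_{\rm nc}$. Hence so is $q$.

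\emph{Hard direction: separation.} Suppose $q\ge 0$ on $\mathbb{D}^d_{\rm nc}$ but $q\notin\mathcal{C}$. By continuity $q\ge0$ on the closed polydisc, using the $\epsilon$-room. The cone $\mathcal{C}$ is a closed convex cone in the finite-dimensional space $\mathcal{V}$ by the preceding lemma. Hahn--Banach therefore gives a linear functional $L$ on $\mathcal{V}$ with $L\ge 0$ on $\mathcal{C}$ and $L(q)<0$. We may take $L(\bar f)=\overline{L(f)}$ by replacing $L$ with its Hermitian part, since $q$ and $\mathcal{C}$ are self-adjoint.

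\emph{Hard direction: GNS construction.} Define a sesquilinear form on $\mathcal{V}_s$ by $\langle h,k\rangle := L(k^*h)$. It is positive semidefinite because $h^*h\in\mathcal{C}$. Quotient by the null vectors to obtain a finite-dimensional Hilbert space $\H$, and write $[h]$ for the class of $h$; let $\H_p\subseteq\H$ be the image of $\mathcal{V}_p$.

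For $h\in\mathcal{V}_p$ we have $X_ih\in\mathcal{V}_s$. Positivity of $L$ on $h^*(1-X_i^*X_i)h$ gives
\[
\|[X_ih]\|\le\|[h]\|.
\]
Hence $[h]\mapsto[X_ih]$ is a well-defined contraction $\H_p\to\H$: null vectors go to null vectors. Extend it by $0$ on $\H_p^\perp$ to a contraction $T_i$ on $\H$. This gives a tuple $\T=(T_1,\ldots,T_d)$ with $\|\T\|\le1$, a point of the closed polydisc (after choosing an orthonormal basis of $\H$).

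\emph{Key evaluation step (main obstacle).} We must show $q(\T)$ is computed by $L$. Write $\Lambda(\X)=1-\sum_i A_i X_i$, and let $w$ be the vector whose entries are the classes $[(\Lambda^{-1})_{jk}]\in\H_p$, arranged compatibly with $\Lambda\otimes 1_{\H}$. The identity $\Lambda^{-1}=1+\sum_i A_iX_i\Lambda^{-1}$ involves only $X_i$ applied to principal functions. On such vectors $T_i$ acts exactly as multiplication by $X_i$. Therefore
\[
\Lambda(\T)\,w=(\text{identity column})\otimes[1].
\]
Since $\Lambda(\T)$ is invertible by the domain hypothesis, this forces $\Lambda(\T)^{-1}(\,\cdot\otimes[1])=w$. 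Consequently $h(\T)[1]=[h]$ for every $h\in\mathcal{V}_p$, and $h(\T)[1]=[h]$ also for $h\in\mathcal{V}_s$ by applying $T_i$ once more.

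\emph{Conclusion.} Using the representation $q=\sum c_{jk}\,h_j^*h_k$ with $h_j,h_k\in\mathcal{V}_s$, the evaluation step gives
\[
\langle q(\T)[1],[1]\rangle=\sum c_{jk}\langle [h_k],[h_j]\rangle=L(q)<0.
\]
This contradicts $q(\T)\ge0$.

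The delicate points are all in the evaluation step: the bookkeeping of the realization, and invertibility of $\Lambda(\T)$ at a boundary point $\|\T\|=1$. The latter is exactly what the $1+\epsilon$ hypothesis buys. If that causes trouble, we would instead apply the whole argument to $q(r\X)$ for $r$ close to $1$ and pass to the limit using closedness of $\mathcal{C}$.
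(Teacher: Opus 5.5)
Your proposal is correct and follows essentially the same route as the paper: cone separation in $\mathcal V$, a GNS space built from the separating functional, contractive multiplication operators obtained from $h^*(1-X_i^*X_i)h\in\mathcal C$, and the identity $\Lambda\Lambda^{-1}=1$ to show that $\Lambda(\T)^{-1}$ reproduces the classes of the entries of $\Lambda^{-1}$ at $[1]$, which gives $\langle q(\T)[1],[1]\rangle=L(q)<0$. The differences are cosmetic. You keep the whole secondary space and extend $T_i$ by zero on $\H_p^\perp$, whereas the paper compresses to the principal space via $Y_i=P_{\mathcal H_2\to\mathcal H_1}T_i$ and therefore evaluates only the principal representation $\Lambda^{-1*}M\Lambda^{-1}$ of $q$; you also spell out the easy direction, the passage to the closed polydisc, and the invertibility of $\Lambda$ at a boundary point, which the paper uses tacitly.
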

\begin{proof}
Suppose $q$ is not in the principal cone. We shall prove that there is a $\Y$ such that $q(\Y)$ fails to be positive semidefinite. 

By the Hahn-Banach cone separation theorem, there is a continuous linear functional $\tau$ on the space of secondary hereditary functions such that 
$\tau(q)$ is negative and $\tau$ is  non-negative on the principal cone.

Define two Hilbert spaces $\mathcal H_1$ and $\mathcal H_2$ corresponding to the principal analytic functions and the secondary analytic functions 
respectively.
(That is, define $\tau(f^*g)= \langle g,f \rangle.$ As $f^*f$ is in the cone, we have a pre-inner product. Quotient out the null space.) 
Let $T_i$ be the natural linear maps from $\mathcal H_1$ into $\mathcal H_2$ by coordinate multipliers.
 That is, $T_i h = X_ih.$
Note $T_i$ is contractive
as $\tau(h^* (1-X_i^*X_i) h)\geq 0$ where $h$ is a principle analytic function. 

\smallskip

Let $\Lambda$ be as obtained from Remark \ref{Lambda}. Let $\mathcal H_0$ denote the constant functions.
Define $\widehat{\Lambda^{-1}}:\mathbb{C}^N \otimes \mathcal H_0 \rightarrow \mathbb{C}^N \otimes \mathcal H_1$ to be the natural image of $\Lambda^{-1}$ as a 
(block) map,
that is 
\[(\widehat{\Lambda^{-1}}(v\otimes 1))(\X) = (\Lambda(\X))^{-1} v
\]
where $N$ is the size of $\Lambda$ as a matrix.

Let $P_{\mathcal H_i \rightarrow \mathcal H_j}$ be the projection from $\mathcal H_i$ onto $\mathcal H_j$, $i>j,$ and $I_{H_j\rightarrow H_i}$ be the inclusion 
map.
Let  $Y_i = P_{\mathcal H_2 \rightarrow \mathcal H_1} T_i$. Therefore $\Y=(Y_1,\ldots, Y_d)$ is the $d$ touple of operators on $\mathcal H_1$.

Now $\Lambda(\Y)\widehat{\Lambda^{-1}}(b \otimes 1)=b \otimes 1,$ as
\begin{align*}
	\Lambda(\Y)\widehat{\Lambda^{-1}}=&\begin{bmatrix}
		\Lambda_{11}(\Y)&\Lambda_{12}(\Y)&\cdots&\Lambda_{1N}(\Y)\\
		\Lambda_{21}(\Y)&\Lambda_{22}(\Y)&\cdots&\Lambda_{2N}(\Y)\\
		\vdots&\vdots&\cdots&\vdots\\
		\Lambda_{N1}(\Y)&\Lambda_{N2}(\Y)&\cdots&\Lambda_{NN}(\Y)
	\end{bmatrix} 
	\begin{bmatrix}
		\Lambda^{-1}_{11}()&\Lambda^{-1}_{12}()&\cdots&\Lambda^{-1}_{1N}()\\
		\Lambda^{-1}_{21}()&\Lambda^{-1}_{22}()&\cdots&\Lambda^{-1}_{2N}()\\
		\vdots&\vdots&\cdots&\vdots\\
		\Lambda^{-1}_{N1}()&\Lambda^{-1}_{N2}()&\cdots&\Lambda^{-1}_{NN}()\\
	\end{bmatrix}\\
	=&\begin{bmatrix}
		\sum_{j=1}^{N}\, \Lambda_{1j}(\Y)\, \Lambda^{-1}_{j1}() &\sum_{j=1}^{N}\, \Lambda_{1j}(\Y)\, \Lambda^{-1}_{j2}()&\cdots&\sum_{j=1}^{N}\, 
\Lambda_{1j}(\Y)\, \Lambda^{-1}_{jN}()\\
		&&&\\
		\sum_{j=1}^{N}\, \Lambda_{2j}(\Y)\, \Lambda^{-1}_{j1}() &\sum_{j=1}^{N}\, \Lambda_{2j}(\Y)\, \Lambda^{-1}_{j2}()&\cdots&\sum_{j=1}^{N}\, 
\Lambda_{2j}(\Y)\, \Lambda^{-1}_{jN}()\\
		\vdots&\vdots&\cdots&\vdots\\
		\sum_{j=1}^{N}\, \Lambda_{Nj}(\Y)\, \Lambda^{-1}_{j1}() &\sum_{j=1}^{N}\, \Lambda_{Nj}(\Y)\, \Lambda^{-1}_{j2}()&\cdots&\sum_{j=1}^{N}\, 
\Lambda_{Nj}(\Y)\, \Lambda^{-1}_{jN}()\\
	\end{bmatrix}\\
	=&P_{H_2\to H_1}\begin{bmatrix}
		\sum_{j=1}^{N}\, \Lambda_{1j}(\T)\, \Lambda^{-1}_{j1}() &\sum_{j=1}^{N}\, \Lambda_{1j}(\T)\, \Lambda^{-1}_{j2}()&\cdots&\sum_{j=1}^{N}\, 
\Lambda_{1j}(\T)\, \Lambda^{-1}_{jN}()\\
		&&&\\
		\sum_{j=1}^{N}\, \Lambda_{2j}(\T)\, \Lambda^{-1}_{j1}() &\sum_{j=1}^{N}\, \Lambda_{2j}(\T)\, \Lambda^{-1}_{j2}()&\cdots&\sum_{j=1}^{N}\, 
\Lambda_{2j}(\T)\, \Lambda^{-1}_{jN}()\\
		\vdots&\vdots&\cdots&\vdots\\
		\sum_{j=1}^{N}\, \Lambda_{Nj}(\T)\, \Lambda^{-1}_{j1}() &\sum_{j=1}^{N}\, \Lambda_{Nj}(\T)\, \Lambda^{-1}_{j2}()&\cdots&\sum_{j=1}^{N}\, 
\Lambda_{Nj}(\T)\, \Lambda^{-1}_{jN}()\\
	\end{bmatrix} \\
	=&I_N \otimes P_{\mathcal H_2 \rightarrow \mathcal H_1}I_{\mathcal H_0 \rightarrow \mathcal H_2} \\
	=&I_N \otimes I_{\mathcal H_0 \rightarrow \mathcal H_1}. 
\end{align*}

In other words, 

\begin{align*}\Lambda(\Y)\widehat{\Lambda^{-1}} & = \Lambda_0 \otimes \widehat{\Lambda^{-1}} + \sum \Lambda_i \otimes Y_i \widehat{\Lambda^{-1}} \\
&=P_{\mathcal H_2 \rightarrow \mathcal H_1}[I_{\mathcal H_1 \rightarrow \mathcal H_2} \Lambda_0 \otimes \widehat{\Lambda^{-1}} + \sum \Lambda_i \otimes T_i 
\widehat{\Lambda^{-1}}  ] \\
&=I_N \otimes P_{\mathcal H_2 \rightarrow \mathcal H_1}I_{\mathcal H_0 \rightarrow \mathcal H_2} \\
&=I_N \otimes I_{\mathcal H_0 \rightarrow \mathcal H_1}.
\end{align*}

Thus, 
\begin{equation}\label{Surprise}
 \widehat{\Lambda^{-1}}(b \otimes 1)=\Lambda(\Y)^{-1}(b \otimes 1).
\end{equation}
Since $q(\Y)$ is a dim$\mathcal H_1 \times$ dim$\mathcal H_1$ matrix, we consider it as an operator on $\mathcal H_1$. In that case, its quadratic form evaluated at the constant function $1$ is 
$$\tau(1^*(b \otimes 1)^*(\Lambda(\Y)^{-1})^*M\Lambda(\Y)^{-1}(b \otimes 1) 1)= \tau((\widehat{\Lambda^{-1}}(b \otimes 1))^*M\widehat{\Lambda^{-1}}(b \otimes 1)) = \tau(q) <0,$$
where the first equality holds because of \eqref{Surprise}.
\end{proof}

We now have the sum of squares formula for the nc polydisc. 
This is different from \cite[Theorem 3.1]{Pa18} both in content and in the line of attack. 
The Montel trick for nonrational Agler type models arose previously in \cite{AgMc18}, and the Caratheodory fixed cone trick
was explicated in various places \cite{HeMc04, HeKlNe14} with the much of the state of the art being accessible in \cite{Vo18}.

\begin{proof}[{\bf Proof of Theorem \ref{Main1}}]
    Let $r<1$.
    Write $q_r(\X) = q(r\X).$
    For each $r,$ $q_r$ is in the corresponding principal cone which has dimension bounded by some constant depending on the of the number of variables and the size of the realization of the original function, independent of $r.$

    By Carath\'eodory's theorem on convex sets, the minimal number of terms in representation of $q_r$ as a sum of squares is uniformly bounded. Consequently, we may select representations with a minimal number of terms and pass to the limit as $r \to 1$. 
    
    By Montel's theorem \cite[Proposition 4.7]{AgMc15}, this limiting representation remains rational, since both the degree and the number of terms are a priori bounded.
\end{proof}

\begin{lem}\label{lem:separate var}
	Let $h$ be a hereditary nc rational function on $\mathbb{D}^d_{\rm nc}$ admitting two representations, namely
	\begin{align}\label{tworep}
		h \equiv \sum_{\text{finite}} a_i^* b_i \quad \text{and} \quad h \equiv \sum_{\text{finite}} c_i^* d_i.
	\end{align}
	Then, for any $C \in \bM_n$,
	$$
	\sum_{\text{finite}} a_i^*(\underline{Z}) C b_i(\underline{W}) = \sum_{\text{finite}} c_i^*(\underline{Z}) C d_i(\underline{W})
	$$
	for every $\underline{Z}, \underline{W} \in \mathbb{D}^d_{\rm nc}\cap \bM_n^d$.
\end{lem}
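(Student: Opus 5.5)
We use the standard polarization trick for hereditary functions, reducing the two-variable identity to the diagonal identity $h(\X)=h(\X)$ on a larger matrix level. The difference $\sum a_i^*(\Z)Cb_i(\W)-\sum c_i^*(\Z)Cd_i(\W)$ is a function of $(\Z,\W)$ that is anti-holomorphic in $\Z$, holomorphic in $\W$, and linear in $C$. The plan is to show that the diagonal hypothesis \eqref{tworep} forces it to vanish.

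\textbf{Step 1 (block embedding).} Fix $\Z,\W\in\mathbb{D}^d_{\rm nc}\cap\bM_n^d$ and $C\in\bM_n$. Set
\[
\X=\begin{pmatrix}\W & 0\\ 0 & \Z\end{pmatrix}\in\bM_{2n}^d ,
\]
which lies in $\mathbb{D}^d_{\rm nc}$. Each $a_i,b_i,c_i,d_i$ is an nc rational function regular on the polydisc, hence respects direct sums. So $b_i(\X)=b_i(\W)\oplus b_i(\Z)$, and similarly for the others. The upper triangular part gives off-diagonal information only once we conjugate. Consider instead
\[
\X_t=S_t\X S_t^{-1},\qquad S_t=\begin{pmatrix}1 & tC'\\ 0&1\end{pmatrix}.
\]
Conjugating a hereditary expression by a non-unitary $S_t$ does not behave well, however, because $a^*$ involves $X^*$. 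Therefore I would not use similarity.

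\textbf{Step 2 (holomorphic/anti-holomorphic separation).} Instead I would use the following function of independent tuples $(\Z,\W)$ at level $2n$:
\[
F(\Z,\W)=\sum a_i(\Z)^*b_i(\W)-\sum c_i(\Z)^*d_i(\W).
\]
Then $F$ is holomorphic in $\W$ and anti-holomorphic in $\Z$, and $F(\X,\X)=0$ for all $\X$ by \eqref{tworep}. A function $G(\bar z,w)$ of this kind, jointly real-analytic near the diagonal, that vanishes on the totally real diagonal $\{z=w\}$ vanishes identically. The argument is to expand around $0$ in power series $\sum g_{\alpha\beta}\bar z^\alpha w^\beta$: vanishing on the diagonal forces all $g_{\alpha\beta}=0$. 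The domain is connected, so $F\equiv 0$ at every level, in particular at level $n$ and level $2n$.

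\textbf{Step 3 (inserting $C$).} Apply $F\equiv0$ at level $2n$ to the block points
\[
\tilde\Z=\begin{pmatrix}\Z&0\\0&\Z\end{pmatrix}=\Z\oplus\Z,\qquad \tilde\W=\W\oplus\W .
\]
This yields only the block-diagonal identity, so to insert $C$ I would use the intertwining property of the analytic functions, which is not affected by adjoints. For any $s$ with $s\W=\W s$ we have $b_i(\W)s=s\,b_i(\W)$, but a general $C$ does not commute with $\W$. The better route is to treat $F$ as an nc function in two arguments, i.e.\ via the tensor form $\sum a_i(\Z)^*\otimes b_i(\W)$.

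The expansion in Step 2 is in fact over words:
\[
F(\Z,\W)=\sum_{u,v} f_{u,v}\,(\Z^{u})^*\,\W^{v},
\]
with scalar coefficients $f_{u,v}$ obtained from the power series of $a_i,b_i,c_i,d_i$ at $0$. Vanishing of $F(\X,\X)=\sum f_{u,v}(\X^u)^*\X^v$ for all $\X$ at all levels forces $f_{u,v}=0$, since distinct $*$-words are linearly independent as functions on $\bM^d$ for large $n$. Hence
\[
\sum_{u,v} f_{u,v}\,(\Z^u)^*\,C\,\W^v=0 .
\]
This is exactly the difference of the two sides of the lemma, written as convergent series on the polydisc.

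\textbf{Main obstacle.} The delicate step is justifying the coefficient extraction: the series $\sum f_{u,v}(\X^u)^*\X^v$ converges only on the polydisc, so one needs its homogeneous components to vanish separately. I would first scale $\X\mapsto (r e^{i\theta}\X)$ with an independent phase $e^{i\phi}$ on $\X^*$ … but these are not independent on the diagonal. Instead I would separate by bidegree, scaling $\X\mapsto r\X$ and using $r\bar{}$ versus $r$ via the substitution $\X\mapsto \lambda\X$ with $\lambda\in\mathbb D$. Since $(\lambda\X)^*=\bar\lambda\X^*$, the expansion becomes a series in $\bar\lambda^{|u|}\lambda^{|v|}$. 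Its vanishing for all $\lambda$ gives each $(|u|,|v|)$-bidegree component zero. Within a fixed bidegree, the expression is a $*$-polynomial identity, and the standard independence of $*$-words then applies.
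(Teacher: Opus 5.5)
Your final argument is correct: expand in words, separate bidegrees via $\X\mapsto\lambda\X$, and use linear independence of $*$-words to get $f_{u,v}=0$. It is, however, a genuinely different route from the paper's, and your reason for abandoning similarity in Step 1 is mistaken.

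Since $a(S\X S^{-1})=S\,a(\X)S^{-1}$, one has $a(S\X S^{-1})^*b(S\X S^{-1})=(S^{-1})^*a(\X)^*\,S^*S\,b(\X)\,S^{-1}$. Conjugation does not spoil a hereditary expression; it inserts the positive matrix $S^*S$ between the two factors. That is exactly the paper's mechanism:
\begin{itemize}
\item take $S=H^{1/2}$ with $\|\X\|I<H\le I$, which keeps $S\X S^{-1}$ in the polydisc;
\item this gives $\sum a_i^*(\X)Hb_i(\X)=\sum c_i^*(\X)Hd_i(\X)$;
\item linearity extends this to every matrix $H$, since positive definite matrices span;
\item evaluating at $\X=\Z\oplus\W$ with $H=\begin{bmatrix}0&C\\0&0\end{bmatrix}$ reads off the off-diagonal block.
\end{itemize}
This uses only similarity invariance, direct sums and linearity at one level.

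Your route proves more. Writing $a_i=\sum_u a_{i,u}\X^u$, it shows that the kernel $\sum_i\overline{a_{i,u}}\,b_{i,v}$ is determined by $h$, which also yields the identity for $\Z,\W$ at different levels with rectangular $C$. It costs more machinery, though:
\begin{itemize}
\item level-independent nc power series of the $a_i,b_i,c_i,d_i$ at $0$;
\item the absence of $*$-polynomial identities on $\bM_n$ for large $n$, so all levels are used;
\item an identity-theorem argument in $(\overline{\Z},\W)$ on the connected set $(\mathbb{D}^d_{\rm nc}\cap\bM_n^d)^2$ to pass from a neighbourhood of $0$ to the whole polydisc.
\end{itemize}
Your phrase ``convergent series on the polydisc'' needs that last step, or an explicit grouping by bidegree, to be justified.

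Finally, your Step 2 is never used in the end, yet it would have given a very short proof on its own. Once $F(\Z,\W)=0$ for independent $\Z,\W$, apply it to $(\Z,S\W S^{-1})$ with $S$ near $I$ and multiply on the right by $S$. The map $S\mapsto\sum a_i(\Z)^*S\,b_i(\W)-\sum c_i(\Z)^*S\,d_i(\W)$ is linear and vanishes on an open set, hence vanishes identically.
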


\begin{proof}
	Suppose $h$ has two representations as in \eqref{tworep}. Let $\X \in \mathbb{D}^d_{\rm nc} \cap \bM_n^d$ be a $d$-tuple of $n \times n$ matrices, and let $H$ be an $n \times n$ 
matrix satisfying $\|\X\|<H \leq I$, where $\|\X\|=\max\{\|X_i\|: 1\leq i\leq d\}$. Define $\tilde{\underline{X}} = H^{1/2} \underline{X} H^{-1/2}$. Then $\tilde{\underline{X}}\in \mathbb{D}^d_{\rm nc} \cap \bM_n^d$. Now, by \eqref{tworep}, we obtain
	$$ \sum_{\text{finite}} a_i^*(H^{1/2} {\underline{X}} H^{-1/2}) b_i(H^{1/2} {\underline{X}} H^{-1/2})
		= \sum_{\text{finite}} c_i^*(H^{1/2} {\underline{X}} H^{-1/2}) d_i(H^{1/2} {\underline{X}} H^{-1/2}) $$
which implies 
$$ \sum_{\text{finite}} H^{-1/2} a_i^*({\underline{X}}) H b_i({\underline{X}}) H^{-1/2}
		= \sum_{\text{finite}} H^{-1/2} c_i^*({\underline{X}}) H d_i({\underline{X}}) H^{-1/2} $$
and finally  \begin{align}\label{positiveH0} \sum_{\text{finite}} a_i^*({\underline{X}}) H b_i({\underline{X}})
		= \sum_{\text{finite}} c_i^*({\underline{X}}) H d_i({\underline{X}}).
	\end{align}
	Now suppose $0<H_1\leq I$ is a $n\times n$ matrix. Then $\|\X\| < (1-\|\X\|)H_1+\|\X\|I\leq I$. Therefore form \eqref{positiveH0}, we have 
	$$ \sum_{\text{finite}} a_i^*({\underline{X}}) \big((1-\|\X\|)H_1+\|\X\|I\big) b_i({\underline{X}})
		= \sum_{\text{finite}} c_i^*({\underline{X}}) \big((1-\|\X\|)H_1+\|\X\|I\big) d_i({\underline{X}})$$
and thus 
	\begin{align}\label{positiveH}\sum_{\text{finite}} a_i^*({\underline{X}}) H_1 b_i({\underline{X}})
		= \sum_{\text{finite}} c_i^*({\underline{X}}) H_1 d_i({\underline{X}})
	\end{align}
	
	Since this identity holds for all $0< H_1\leq I$, it also holds for all $H_1>0$. A standard density argument shows that \eqref{positiveH} also holds for any $H_1$. Next, substitute
	\begin{align*}
		{\underline{X}} = 
		\begin{bmatrix}
			\underline{Z} & 0 \\
			0 & \underline{W}
		\end{bmatrix}
		\quad \text{and} \quad
		H_1 =
		\begin{bmatrix}
			0 & C \\
			0 & 0
		\end{bmatrix}
	\end{align*}
	for some fixed $C$ in \eqref{positiveH}. Then we have
	\begin{align*}
		\sum_{\text{finite}}
		\begin{bmatrix}
			a_i^*(\underline{Z}) & 0 \\
			0 & a_i^*(\underline{W})
		\end{bmatrix}
		\begin{bmatrix}
			0 & C \\
			0 & 0
		\end{bmatrix}
		\begin{bmatrix}
			b_i(\underline{Z}) & 0 \\
			0 & b_i(\underline{W})
		\end{bmatrix}
		=
		\sum_{\text{finite}}
		\begin{bmatrix}
			c_i^*(\underline{Z}) & 0 \\
			0 & c_i^*(\underline{W})
		\end{bmatrix}
		\begin{bmatrix}
			0 & C \\
			0 & 0
		\end{bmatrix}
		\begin{bmatrix}
			d_i(\underline{Z}) & 0 \\
			0 & d_i(\underline{W})
		\end{bmatrix}.
	\end{align*}
	This further implies
	\begin{align}
		\begin{bmatrix}
			0 & \sum\limits_{\text{finite}} a_i^*(\underline{Z}) C b_i(\underline{W}) \\
			0 & 0
		\end{bmatrix}
		=
		\begin{bmatrix}
			0 & \sum\limits_{\text{finite}} c_i^*(\underline{Z}) C d_i(\underline{W}) \\
			0 & 0
		\end{bmatrix}.
	\end{align}
	This concludes the proof.
\end{proof}

\begin{proof}[{\bf Proof of Theorem \ref{Main2}}]
	Let $\Z = (Z_1, \ldots, Z_d)$ and $\W = (W_1, \ldots, W_d) \in \mathbb{D}^d_{\rm nc} \subseteq \bM_n^d$, and let $u, v, \alpha, \beta \in \C^n$. By Theorem \ref{Main1} and 
Lemma~\ref{lem:separate var}, there exist rational functions $r_i, s_i$, $i = 1, \ldots, N$, such that  
	\begin{align}\label{eq:1}
		\nn &u^* [\beta \alpha^* - f(\W)^* \beta \alpha^* f(\Z)] v \\
		\nn &= \sum_{i=1}^{N} u^* \big[ r_i(\W)^* (\beta \alpha^* - W_{i_j}^* \beta \alpha^* Z_{i_j}) r_i(\Z) + s_i(\W)^* \beta \alpha^* s_i(\Z) \big] v 
\end{align}
		Hence,
\begin{align} \nn & u^* \beta \alpha^* v + \sum_{i=1}^{N} u^* r_i(\W)^* W_{i_j}^* \beta \alpha^* Z_{i_j} r_i(\Z) v \\
		= &\, u^* f(\W)^* \beta \alpha^* f(\Z) v + \sum_{i=1}^{N} u^* r_i(\W)^* \beta \alpha^* r_i(\Z) v + \sum_{i=1}^{N} u^* s_i(\W)^* \beta \alpha^* s_i(\Z) 
v.
	\end{align}
Let $r(\Z) = [r_1(\Z), \ldots, r_N(\Z),s_1(\Z), \ldots, s_N(\Z)]^t,$ and let $ \widehat\Delta(\Z)$ be a $2N \times 2N$ block matrix whose only non-zero entries are the first $N$ diagonal entries and these non-zero entries come from $\{Z_1, \ldots, Z_d\}$. Using these notations, \eqref{eq:1} can be rewritten as
	\begin{align}\label{eq:norm-equality}
		u^* \beta \alpha^* v + u^* r(\W)^* \widehat\Delta(\W)^* 
		\begin{matrix}
			I_{\C^{2N}} \\ \otimes \\ \beta \alpha^*
		\end{matrix}
		\widehat\Delta(\Z) r(\Z) v 
		= \, u^* f(\W)^* \beta \alpha^* f(\Z) v
		+ u^* r(\W)^*
		\begin{matrix}
			I_{\C^{2N}} \\ \otimes \\ \beta \alpha^*
		\end{matrix}
		r(\Z) v.
	\end{align}
	Consider a linear transformation $\tilde T$ defined on the subspace 
$$\mathcal{S}:={\rm{span}} \{ 		\alpha^* v \oplus 			\big(I_{\C^{2N}}  \otimes  \alpha^*\big) 		\widehat\Delta(\Z) r(\Z)\, v: \alpha, v \in 
\C^{n} \}$$ 
by
	\begin{align*}
		\tilde T
		\begin{bmatrix}
			\alpha^* v \\[3pt]
			\begin{matrix}
				I_{\C^{2N}} \\ \otimes \\ \alpha^*
			\end{matrix}
			\widehat\Delta(\Z) r(\Z)\, v
		\end{bmatrix}
		=
		\begin{bmatrix}
			\alpha^* f(\Z) v \\[3pt]
			\begin{matrix}
				I_{\C^{2N}} \\ \otimes \\ \alpha^*
			\end{matrix}\,r(\Z)\, v
		\end{bmatrix},
	\end{align*}
	 and extend it by defining it to be $0$ on $\mathcal{S}^\perp$. Let 
\[
	\tilde T =
	\begin{bmatrix}
		\tilde{A} & \tilde{B} \\[3pt]
		\tilde{C} & \tilde{D}
	\end{bmatrix} :
	\begin{matrix}
		\C \\ \oplus \\ \C^{2N} 
	\end{matrix}
	\to
	\begin{matrix}
		\C \\ \oplus \\ \C^{2N}
	\end{matrix}
	\]
be its block matrix. Thanks to \eqref{eq:norm-equality}, $\tilde T$ is a contraction. The definition of $\tilde T$ implies
	\begin{align*}
		& \tilde{A} \alpha^* v + \tilde{B}
		\begin{matrix}
			I_{\C^{2N}} \\ \otimes \\ \alpha^*
		\end{matrix}
		\widehat\Delta(\Z) r(\Z)\, v
		= \alpha^* f(\Z)\, v, \\[3pt]
		& \tilde{C} \alpha^* v + \tilde{D}
		\begin{matrix}
			I_{\C^{2N}} \\ \otimes \\ \alpha^*
		\end{matrix}
		\widehat\Delta(\Z) r(\Z)\, v
		=
		\begin{matrix}
			I_{\C^{2N}} \\ \otimes \\ \alpha^*
		\end{matrix}\,r(\Z)\,v.
	\end{align*}
	for all $v \in \C^n$. Hence
	\begin{align}
	\label{eq:00}	& \tilde{A} \alpha^* + \tilde{B}
		\begin{matrix}
			I_{\C^{2N}} \\ \otimes \\ \alpha^*
		\end{matrix}
		\widehat\Delta(\Z) r(\Z) = \alpha^* f(\Z), \\[3pt]
		\label{eq:01}& \tilde{C} \alpha^* + \tilde{D}
		\begin{matrix}
			I_{\C^{2N}} \\ \otimes \\ \alpha^*
		\end{matrix}
		\widehat\Delta(\Z) r(\Z)
		=
		\begin{matrix}
			I_{\C^{2N}} \\ \otimes \\ \alpha^*
		\end{matrix}\,r(\Z).
	\end{align}
We may rewrite the above two equations \eqref{eq:00} and \eqref{eq:01} as follows:
	\begin{align}
	\label{eq:10}	& 	\begin{matrix}
		I_{\C} \\ \otimes \\ \alpha^*
	\end{matrix}\, \begin{matrix}
	\tilde{A} \\ \otimes \\ I_{\C^n}
	\end{matrix} + \begin{matrix}
	I_{\C} \\ \otimes \\ \alpha^*
	\end{matrix}\,
	\begin{matrix}
		\tilde{B} \\ \otimes \\ I_{\C^n}
	\end{matrix}
	\widehat\Delta(\Z) r(\Z) = \begin{matrix}
		I_{\C} \\ \otimes \\ \alpha^*
	\end{matrix}\,\begin{matrix}
	I_{\C} \\ \otimes \\ f(\Z)
	\end{matrix}, \\[3pt]
	\label{eq:11}& \begin{matrix}
		I_{\C^{2N}} \\ \otimes \\ \alpha^*
	\end{matrix}\, \begin{matrix}
		\tilde{C} \\ \otimes \\ I_{\C^n}
	\end{matrix} + \begin{matrix}
	I_{\C^{2N}} \\ \otimes \\ \alpha^*
	\end{matrix}\,
	\begin{matrix}
	\tilde{D} \\ \otimes \\ I_{\C^n}
	\end{matrix}
	\widehat\Delta(\Z) r(\Z)
	=
	\begin{matrix}
		I_{\C^{2N}} \\ \otimes \\ \alpha^*
	\end{matrix}\,r(\Z),
\end{align}
for every $\alpha\in \C^n$. Consequently, from \eqref{eq:10} and \eqref{eq:11}, we have
\begin{align*}
	& \begin{matrix}
		\tilde{A} \\ \otimes \\ I_{\C^n}
	\end{matrix} + 
	\begin{matrix}
		\tilde{B} \\ \otimes \\ I_{\C^n}
	\end{matrix}
	\widehat\Delta(\Z) r(\Z) = \begin{matrix}
		I_{\C} \\ \otimes \\ f(\Z)
	\end{matrix}, \\[3pt]
	&  \begin{matrix}
		\tilde{C} \\ \otimes \\ I_{\C^n}
	\end{matrix} + 
	\begin{matrix}
		\tilde{D} \\ \otimes \\ I_{\C^n}
	\end{matrix}
	\widehat\Delta(\Z) r(\Z)
	= \begin{matrix}
		I_{\C^{2N}} \\ \otimes \\ I_{\C^n}
	\end{matrix}\, r(\Z).
\end{align*}
Solving the above two equations we obtain
\begin{align}\label{eq:repr-0}
	 \begin{matrix}
		I_{\C} \\ \otimes \\ f(\Z)
	\end{matrix}=\begin{matrix}
	\tilde{A} \\ \otimes \\ I_{\C^n}
	\end{matrix} + 
	\begin{matrix}
	\tilde{B} \\ \otimes \\ I_{\C^n}
	\end{matrix}
	\widehat\Delta(\Z) \left(\begin{matrix}
		I_{\C^{2N}} \\ \otimes \\ I_{\C^n}
	\end{matrix} -   \begin{matrix}
	\tilde{D} \\ \otimes \\ I_{\C^n}
	\end{matrix}
	\widehat\Delta(\Z)\right)^{-1} \begin{matrix}
		\tilde{C} \\ \otimes \\ I_{\C^n}
	\end{matrix}.
\end{align}
Let $P:\C^{N}\oplus\C^{N}\to \C^{N}\oplus 0$ be the orthogonal projection given by $P(x\oplus y)=x\oplus 0$. 
Define $\Delta(\Z)$ to be the $2N \times 2N$ block diagonal matrix whose first $N$ diagonal entries are the same as those of $ \widehat\Delta(\Z)$ and the $(N+j)$th diagonal entry is the same as the $j$th one. Then \eqref{eq:repr-0} can be rewritten as follows
\begin{align}\label{eq:repr-1}
	\begin{matrix}
		I_{\C} \\ \otimes \\ f(\Z)
	\end{matrix}=\begin{matrix}
		\tilde{A} \\ \otimes \\ I_{\C^n}
	\end{matrix} + 
	\begin{matrix}
		\tilde{B}P \\ \otimes \\ I_{\C^n}
	\end{matrix}
	\Delta(\Z) \left(\begin{matrix}
		I_{\C^{2N}} \\ \otimes \\ I_{\C^n}
	\end{matrix} -   \begin{matrix}
		\tilde D P \\ \otimes \\ I_{\C^n}
	\end{matrix}
	\Delta(\Z)\right)^{-1} \begin{matrix}
		\tilde{C} \\ \otimes \\ I_{\C^n}
	\end{matrix}.
\end{align}
Now consider
\begin{align}\label{eq:matrix-T}
T = \begin{bmatrix}
	A&B\\
	C&D
\end{bmatrix}
:= \begin{bmatrix}
\tilde{A} & \tilde{B}P\\ 
\tilde{C} & \tilde{D}P
\end{bmatrix}
 = \begin{bmatrix}
	\tilde{A} & \tilde{B}\\ 
	\tilde{C} & \tilde{D}
\end{bmatrix} \begin{bmatrix}
1& 0 \\
0 & P
\end{bmatrix}.
\end{align}
Then clearly $T:\C\oplus\C^{2N}\to \C\oplus\C^{2N}$ is a contraction. Therefore, \eqref{eq:repr-1} and \eqref{eq:matrix-T} together implies 
\eqref{eq:realization-form}. This completes the proof.

\end{proof}



\begin{proof}[{\bf Proof of Corollary \ref{thm:realization-inner-rational}}]
Let $f$ be a rational function on $\mathbb{D}^d_{\rm nc}$ with the realization formula
\begin{align*}
	f(\Z) = A + B\, \Delta(\Z)\, (1 - D\, \Delta(\Z))^{-1} C
\end{align*}
given by \eqref{eq:realization-form}. Suppose the colligation matrix $T$ is unitary. We show that $f$ is inner. Since $T$ is unitary,
for each $\Z \in \mathbb{D}^d_{\rm nc}$ we have
	\begin{align}\label{eq:f*f-comput-0}
		\nn&I - f(\Z)^* f(\Z)\\
		=& I - A^* A 
		- A^* B \Delta(\Z) (I - D \Delta(\Z))^{-1} C
		- C^* (I - \Delta(\Z)^* D^*)^{-1} \Delta(\Z)^* B^* A \\
		\nn&\quad - C^* (I - \Delta(\Z)^* D^*)^{-1} 
		\Delta(\Z)^* B^* B \Delta(\Z) 
		(I - D \Delta(\Z))^{-1} C \\[6pt]
		\nn=& C^* C 
		+ C^* D \Delta(\Z) (I - D \Delta(\Z))^{-1} C
		+ C^* (I - \Delta(\Z)^* D^*)^{-1} 
		\Delta(\Z)^* D^* C \\
		\nn&\quad - C^* (I - \Delta(\Z)^* D^*)^{-1} 
		\Delta(\Z)^* (I - D^* D) 
		\Delta(\Z) (I - D \Delta(\Z))^{-1} C \\[6pt]
		\nn=& C^* (I - \Delta(\Z)^* D^*)^{-1}
		\Big[
		(I - \Delta(\Z)^* D^*)(I - D \Delta(\Z))
		+ (I - \Delta(\Z)^* D^*) D \Delta(\Z) \\
		\nn&\qquad\quad
		+ \Delta(\Z)^* D^* (I - D \Delta(\Z))
		- \Delta(\Z)^* (I - D^* D) \Delta(\Z)
		\Big]
		(I - D \Delta(\Z))^{-1} C \\[6pt]
		=& C^* (I - \Delta(\Z)^* D^*)^{-1}
		\big[ I - \Delta(\Z)^* \Delta(\Z) \big]
		(I - D \Delta(\Z))^{-1} C.
	\end{align}
	Note that on $\partial\mathbb{D}^d_{\rm nc}$, we have $I - \Delta(\Z)^* \Delta(\Z) = 0$. Let $\bH_{\bU_n^d}$ be the Haar measure on $\bU_n^d$. To conclude the proof, 
we only need to ensure that for every $n$, $(I - D\, \Delta(\Z))$ is non-singular for almost all $\Z \in \partial\mathbb{D}^d_{\rm nc} \cap \bU_n^d$ with respect to the 
Haar measure $\bH_{\bU_n^d}$. For $\theta \in [0, 2\pi]$, define
	 
	\begin{align*}
		\mathscr{A}_n(\theta)=\{e^{i\theta}\Z\in \partial\mathbb{D}^d_{\rm nc}\cap \bU_n^d:  I - D \Delta(e^{i\theta}\Z) \text{ is singular}\}.
	\end{align*}
	Then,
	\begin{align*}
		\bH_{\bU_n^d}(\mathscr{A}_n(0))=\int_{\bU_n^d} 1_{\mathscr{A}_n(0)}\, d\bH_{\bU_n^d}=\int_{\bU_n^d} 1_{\mathscr{A}_n(\theta)}\, d\bH_{\bU_n^d}.
	\end{align*}
	The last equality holds because $\bH_{\bU_n^d}$ is rotation invariant. Therefore, by Fubini's theorem, we have
	\begin{align}\label{eq:f*f-comput-1}
		\nn\int_0^{2\pi}\bH_{\bU_n^d}(\mathscr{A}_n(0))\, d\theta
		=&\int_0^{2\pi}\,\int_{\bU_n^d}\,  1_{\mathscr{A}_n(\theta)}\, d\bH_{\bU_n^d}\, d\theta\\
		=&\int_{\bU_n^d}\int_0^{2\pi} 1_{\mathscr{A}_n(\theta)}\, d\theta\,d\bH_{\bU_n^d}.
	\end{align}
	
	Note that for a fixed $\Z \in \partial\mathbb{D}^d_{\rm nc} \cap \bU_n^d$, $I - D\, \Delta(e^{i\theta}\Z)$ is singular for at most a finite number of values of 
$\theta$ as $\det(I - D\, \Delta(e^{i\theta}\Z))$ is polynomial in $\theta$ and it has only finite roots. Hence, from \eqref{eq:f*f-comput-1}, we have
	
	\begin{align*}
		\int_0^{2\pi}\bH_{\bU_n^d}(\mathscr{A}_n(0))\, d\theta= \int_{\bU_n^d}\int_0^{2\pi} 1_{\mathscr{A}_n(\theta)}\, d\theta\,d\bH_{\bU_n^d}=0.
	\end{align*}
	This shows that for every $n \in \N$, $(I - D\, \Delta(\Z))$ is non-singular almost everywhere on $\Z \in \partial\mathbb{D}^d_{\rm nc} \cap \bU_n^d$ with respect to 
the Haar measure $d\bH_{\bU_n^d}$. This proves that $f$ is inner.

Conversely, suppose $f$ is an inner nc rational function. By Theorem \ref{Main1}, there exist nc rational functions $r_i, s_i$, $1 \leq i \leq N$, for some fixed $N$, such that
\begin{align}
	f \equiv \sum_{i=1}^N r_i^* (1 - X_{i_k}^* X_{i_k}) r_i 
	+ \sum_{i=1}^N s_i^* s_i.
\end{align}
Since $f$ is inner, this forces $s_i = 0$ for all $1 \leq i \leq N$. 

Now, by repeating the arguments in the proof of Theorem \ref{Main2}, we obtain a unitary colligation matrix
\[
T = \begin{bmatrix}
	A & B \\
	C & D
\end{bmatrix}
: 
\begin{matrix}
	\C \\ \oplus \\ \C^{N}
\end{matrix}
\to
\begin{matrix}
	\C \\ \oplus \\ \C^{N}
\end{matrix},
\]
whose action on
\[
\mathcal{S} := \rm{span} \left\{ 
\alpha^* v \oplus 
\big(I_{\C^{N}} \otimes \alpha^*\big)\, \Delta(\Z)\, r(\Z)\, v 
: \alpha, v \in \C^{n} 
\right\}
\]
is given by
\begin{align*}
	T
	\begin{bmatrix}
		\alpha^* v \\[3pt]
		\big(I_{\C^{N}} \otimes \alpha^*\big)\, \Delta(\Z)\, r(\Z)\, v
	\end{bmatrix}
	=
	\begin{bmatrix}
		\alpha^* f(\Z) v \\[3pt]
		\big(I_{\C^{N}} \otimes \alpha^*\big)\, r(\Z)\, v
	\end{bmatrix},
\end{align*}
where $\Delta(\Z)$ is a block diagonal $N \times N$ matrix with the diagonal entries coming from $(Z_{1}, \dots, Z_{d})$ and $r = (r_1, \dots, r_N)^t$. Moreover, $T$ satisfies the realization \eqref{eq:realization-form}.
\end{proof}

\section{Carath\'eodory's Theorem}
Following the approach of \cite{AlBhJiKu23}, we prove a noncommutative analogue of Theorem~CR. 
In the commutative case, the result is known in the bidisc and not known yet in the polydisc.

\begin{proof}[{\bf Proof of Theorem \ref{Main}}]
Thanks to \cite[Theorem 4.1]{Ko23}, we may assume that $f$ is an nc polynomial on $\mathbb{D}^d_{\rm nc}$ satisfying $\|f(\Z)\| \le 1$.		
By Theorem~\ref{Main2}, there exist $N\in\N$ and a contraction
	 $T=\begin{bmatrix}
		A&B\\
		C&D
	\end{bmatrix}$ on $\C\oplus \C^{N}$ such that 
	\begin{align}
		f(\Z)= A + B\, \Delta(\Z)\, (1 - D\, \Delta(\Z))^{-1} C
	\end{align}
	 on $\mathbb{D}^d_{\rm nc}$ where $\Delta(\Z)$ is a block diagonal $N \times N$ matrix with the diagonal entries coming from $(Z_{1}, \dots, Z_{d})$. Let $D_{T^{*}}$ and $D_{T}$ be the defect operators 
	\begin{align}
		(I - T T^{*})^{1/2} \quad \text{and} \quad (I - T^{*} T)^{1/2},
	\end{align}
	respectively.
	 Let
	$$ D_{T^{*}} = \begin{bmatrix}
		S_{1} & S_{2} \\ S_{3} & S_{4}
	\end{bmatrix} \text{ and } D_{T} = \begin{bmatrix}
		T_{1} & T_{2} \\ T_{3} & T_{4}
	\end{bmatrix}$$
	as operators from $\C \oplus \C^{N}$ into itself. Let $\H := \C \oplus \mathbb{C}^{N}.$ Next we shall employ a finite dilation,  
i.e., for any $m\geq1,$ there is a space $\H_{m}$ consisting of the direct sum of $(m+1)$ copies of $\H$ and a unitary $U_{m}$ on it such that $T^{j} = P_{\H} 
U_{m}^{j}|_{\H}$ for $j=1,\dots,m.$ 
This idea originated with \cite{Ne85}, see also \cite{LeSh14}. 
A sequence of functions $\Theta_{m}$ induced by the 
unitaries $U_{m}$ will be the approximating sequence. To suite the idea of finite dilation to our aim, consider the space
	$$\K_m \bydef \C^{N} \oplus \H\oplus \dots \oplus \H\oplus \C \oplus \C^{N},$$
	where $\mathcal{H} $ occurs $(m-1)$ times and the block operator matrix
	$$ U_{m} :=  \begin{bmatrix}
		A & B & 0 &\dots & 0 & S_{1} & S_{2} \\
		C & D & 0 &\dots & 0 & S_{3} & S_{4} \\
		T_{1} & T_{2} & 0 & \dots & 0 & -A^{*} & -C^{*}\\
		T_{3} & T_{4} & 0 & \dots & 0 & -B^{*} & -D^{*}\\
		0 & 0 & I_{\mathcal{H}} & \dots & 0 & 0 & 0 \\
		\vdots & \vdots & \vdots & \ddots & \vdots & \vdots & \vdots \\
		0 & 0 & 0 & \dots & I_{\mathcal{H}} & 0 & 0
	\end{bmatrix} $$
	acting on the space $ \C \oplus \K_m$.

A straightforward calculation will show that $U_{m} $ is a unitary matrix. We rewrite the block matrix representation of $U_m$ as $ \begin{bmatrix}
	A & B_{m} \\ C_{m} & D_{m}
\end{bmatrix}$, where 
$$B_{m}= \begin{bmatrix} B & 0 &  \dots & 0 & S_1 & S_2
\end{bmatrix}, \;\; C_{m} = \begin{bmatrix}
	C & T_{1} & T_{3} & 0 & \dots & 0
\end{bmatrix}^{t},$$ and
$$ D_{m} = \begin{bmatrix}
	D & 0 &\dots & 0 & S_{3} & S_{4} \\
	T_{2} & 0 & \dots & 0 & -A^{*} & -C^{*}\\
	T_{4} & 0 & \dots & 0 & -B^{*} & -D^{*}\\
	0 & I_{\mathcal{H}} & \dots & 0 & 0 & 0 \\
	\vdots & \vdots & \ddots & \vdots & \vdots & \vdots \\
	0 & 0 & \dots & I_{\mathcal{H}} & 0 & 0
\end{bmatrix}.$$
Consider the following block diagonal matrices 
\begin{align*}
	\Delta_m(\Z)&=\diag(\Delta(\Z),\hat\Delta(\Z)\cdots,\hat\Delta(\Z))
\end{align*}
where $\hat\Delta(\Z)=\diag(\Delta_{11}(\Z), \Delta(\Z))$ and the matrix-valued functions $f_{m}$ given by 
\begin{align}
	f_{m}(\Z) = A + B_{m} \Delta_m(\Z) (I - D_{m} \Delta_m(\Z))^{-1} C_{m}.
\end{align}
By Theorem~\ref{thm:realization-inner-rational}, each function $f_{m}$ is rational inner, since 
\begin{align}
	\begin{bmatrix}
		A & B_{m} \\
		C_{m} & D_{m}
	\end{bmatrix}
\end{align}
is a unitary matrix. We note that 
$$ D_{m}\Delta_m(\Z) = \begin{bmatrix}
	D\Delta(\Z) & 0 &\dots & 0 & S_{3}\,\Delta_{11}(\Z) & S_{4}\,\Delta(\Z) \\
	T_{2}\,\Delta(\Z) & 0 & \dots & 0 & -A^{*}\,\Delta_{11}(\Z) & -C^{*}\Delta(\Z)\\
	T_{4}\,\Delta(\Z) & 0 & \dots & 0 & -B^{*}\,\Delta_{11}(\Z) & -D^{*}\,\Delta(\Z)\\
	0 & I_{\mathcal{H}}\hat\Delta(\Z) & \dots & 0 & 0 & 0 \\
	\vdots & \vdots & \ddots & \vdots & \vdots & \vdots \\
	0 & 0 & \dots & I_{\mathcal{H}}\hat\Delta(\Z) & 0 & 0
\end{bmatrix}.$$
Then, for a fix $k\in\N\cup\{0\}$,  a computation for matrix multiplication gives that 
\begin{align}\label{eq:main_equality}
	B_m(D_{m}\Delta_m(\Z))^k C_m = B(D\Delta(\Z))^k C, \text{ for all }m\geq k+2.
\end{align}
Fix a $\Z\in\mathbb{D}^d_{\rm nc}$. Let $\epsilon>0$. Let $k_0\in\N$ such that $\|\Delta(\Z)\|^k<\epsilon$ for all $k\geq k_0$. Then
\begin{align}\label{eq:est-0}
	\nn\|f_{m}(\Z) - f(\Z)\| & =  \|B_{m} \Delta_m(\Z) (I - D_{m} \Delta_m(\Z))^{-1} C_{m}- B \Delta(\Z) (I - D \Delta(\Z))^{-1} C\|\\
	\nn& = \left\| \sum_{k=0}^{\infty} \Big( B_m \Delta_m(\Z)\big( D_{m} \Delta_m(\Z) \big)^k C_m - B \Delta(\Z)\big( D \Delta(\Z) \big)^k C\Big)\right\|\\
	& \leq  \sum_{k=0}^{\infty} \left\|\Big( B_m\Delta_m(\Z) \big( D_{m} \Delta_m(\Z) \big)^k C_m - B\Delta(\Z) \big( D \Delta(\Z) \big)^k C\Big)\right\|
\end{align}
Note that, $\|\Delta_m(\Z)\|=\|\Delta(\Z)\|$. Therefore,
\begin{align}\label{eq:est-1}
	\nn&\sum_{k=k_0+1}^{\infty} \left\| \Big( B_m\Delta_m(\Z) \big( D_{m} \Delta_m(\Z) \big)^k C_m - B\Delta(\Z) \big( D \Delta(\Z) \big)^k C\Big)\right\|\\
	\nn\leq &~ 2\,  \| \Delta(\Z)\|\, \sum_{k=k_0+1}^{\infty} \|\Delta(\Z)\|^k\\
	 <&~ 2\epsilon\, \frac{\|\Delta(\Z)\|}{1-\|\Delta(\Z)\|}.
\end{align}
Thanks to \eqref{eq:main_equality}, for $m\geq k_0+2$,
\begin{align}\label{eq:est-2}
	 \sum_{k=0}^{k_0} \left\|\Big( B_m \big( D_{m} \Delta_m(\Z) \big)^k C_m - B \big( D \Delta(\Z) \big)^k C\Big)\right\|=0
\end{align}
Hence, thanks to \eqref{eq:est-1} and \eqref{eq:est-2}, from \eqref{eq:est-0} we have 
\begin{align*}
	\|f_{m}(\Z) - f(\Z)\|<2\epsilon\, \frac{\|\Delta(\Z)\|}{1-\|\Delta(\Z)\|} \qquad \text{ for all }  m\geq k_0+2.
\end{align*}
This shows that $f_m$ converges to $f$ point wise on $\mathbb{D}^d_{\rm nc}$, which further implies that $f_m$ converges to $f$ uniformly on every compact subset of $\mathbb{D}^d_{\rm nc}\cap \bM_n^d$ for every $n\in\N$.
\end{proof}

\noindent \textsf{Acknowledgement}: T. Bhattacharyya's work is supported by J C Bose Fellowship number JCB/2021/000041 of ANRF and the DST FIST
program-2021 [TPN-700661]. C. Pradhan acknowledges support from the Fulbright-Nehru postdoctoral fellowship.

\end{document}